\newcommand{\C}{\mathbb{C}}
\newtheorem{thm}{Theorem}[section]
\newtheorem{lem}[thm]{Lemma}
\newtheorem{prop}[thm]{Proposition}
\newtheorem{define}[thm]{Definition}
\newtheorem{exa}[thm]{Example}
\newtheorem{rem}[thm]{Remark}
\newtheorem{cor}[thm]{Corollary}
\renewcommand{\b}{\beta}
\renewcommand{\d}{\delta}
\newcommand{\0}{\mathbf{0}}
\newcommand{\av}[1]{\left|{#1}\right|}
\newcommand{\ip}[2]{\left\langle{{#1}},{{#2}}\right\rangle}
\renewcommand{\l}{\left}
\renewcommand{\r}{\right}
\newcommand{\be}{\begin{enumerate}}
\newcommand{\bi}{\begin{itemize}}
\newcommand{\ee}{\end{enumerate}}
\newcommand{\ei}{\end{itemize}}
\newcommand{\ii}{\item}
\newcommand{\om}{{\bf 1}}
\newcommand{\Spec}{{\mathrm {Spec}}}
\title{Optimizing Gershgorin for Symmetric Matrices}
\author{Lee DeVille\\Department of Mathematics\\University of Illinois}
\begin{document}

\maketitle

\begin{abstract}
  The Gershgorin Circle Theorem is a well-known and efficient method
  for bounding the eigenvalues of a matrix in terms of its entries.
  If $A$ is a symmetric matrix, by writing $A = B + x{\bf 1}$, where
  ${\bf 1}$ is the matrix with unit entries, we consider the problem
  of choosing $x$ to give the optimal Gershgorin bound on the
  eigenvalues of $B$, which then leads to one-sided bounds on the
  eigenvalues of $A$.  We show that this $x$ can be found by an
  efficient linear program (whose solution can in may cases be written
  in closed form), and we show that for large classes of matrices,
  this shifting method beats all existing piecewise linear or
  quadratic bounds on the eigenvalues.  We also apply this shifting
  paradigm to some nonlinear estimators and show that under certain
  symmetries this also gives rise to a tractable linear program.  As
  an application, we give a novel bound on the lowest eigenvalue of a
  adjacency matrix in terms of the ``top two'' or ``bottom two''
  degrees of the corresponding graph, and study the efficacy of this
  method in obtaining sharp eigenvalue estimates for certain classes
  of matrices.

\end{abstract}

{\bf AMS classification:}  65F15, 15A18, 15A48

{\bf Keywords:} Gershgorin Circle Theorem, Eigenvalue Bounds, Linear Program, Adjacency Matrix

\section{Introduction}\label{sec:intro}

One of the best known bounds for the eigenvalues of a matrix is the
classical Gershgorin Circle Theorem~\cite{Gersh, Varga.book}, which
allows us to determine an inclusion domain of the eigenvalues of a
matrix that can be determined solely from the entries of this matrix.
Specifically, given an $n\times n$ matrix $A$ whose entries are
denoted by $a_{ij}$, we define
\begin{equation*}
  R_i(A) := \sum_{j\neq i}\av{a_{ij}},\quad D_i = \{z\in\C:\av{z-a_{ii}} \le R_i(A)\}.
\end{equation*}
Then the spectrum of $A$ is contained inside the union of the $D_i$.

If $A$ is a symmetric matrix, then the spectrum is real.  In general,
we might like to determine whether $A$ is positive definite, or even
to determine a bound on the spectral gap of $A$, i.e. a number
$\alpha>0$ such that all eigenvalues are greater than $\alpha$.  Let
us refer to the {\em diagonal dominance} of the $i$th row of the
matrix by the quantity $a_{ii} - R_i(A)$.  A restatement of Gershgorin
is that the smallest eigenvalue is at least as large as the smallest
diagonal dominance of any row.

As an example, let us consider the matrix
\begin{equation}\label{eq:56}
  A=\left(\begin{array}{ccc} 6&5&5 \\ 5&6&5 \\ 5&5&6\end{array}\right),
\end{equation}
and we see the quantity $a_{ii}-R_i(A) = -4$ for all rows.  From this
and Gershgorin we know that all eigenvalues are at least $-4$, but we
cannot assert that $A$ is positive definite.  However, we can compute
directly that $\Spec(A) = \{1,1,16\}$; not only is $A$ positive
definite, it has a spectral gap of $1$.  

We then note that $A = I + 5{\bf 1}$ (here and below we use ${\bf 1}$
to denote the matrix with all entries equal to $1$). Since $I$ is
positive definite and ${\bf 1}$ is positive semidefinite, then it
follows that $A$ is positive definite as well.  In fact, we know that
the eigenvalues of $A$ are at least as large as those of $I$, so that
$A$ has a spectral gap of at least $1$, which gives us the exact
spectral gap.  

Let us consider the family of matrices $A - x{\bf 1}$ with $x\ge 0$,
and consider how the diagonal dominance changes. Subtracting off the
value of $x$ from each entry moves the diagonal to the left by $x$,
but as long as the off-diagonal terms are positive, it decreases the
off-diagonal sum by a factor of $2x$. This has the effect of moving
the center of each Gershgorin circle to the left by $x$, but decrease
the radius by $2x$, effectively moving the left-hand edge of each
circle to the right by $x$. This gives an improved estimate for the
lowest eigenvalue.  It is not hard to see in this case that choosing
$x=5$ gives the optimal diagonal dominance of 1.  Generalizing this
idea: let $A$ be a symmetric $n\times n$ matrix with strictly positive
entries. Since ${\bf 1}$ is positive semidefinite, whenever $x\ge0$,
$A = B + x \om$ is positive definite whenever $B$ is.  However,
subtracting $x$ from each entry, at least until some of the entries
become negative, will only increase the diagonal dominance of each
row: each center of each Gershgorin circle moves to the left by $x$,
but the radius decreases by $(N-1)x$.  From this it follows that we
can always improve the Gershgorin bound for such a matrix with
positive entries by choosing $x$ to be the smallest off-diagonal term.

However, now consider the matrix
\begin{equation}\label{eq:Aneg}
  A = \left(\begin{array}{ccc} 6&1&3 \\ 1&7&4 \\3&4&5\end{array}\right).
\end{equation}
We see directly that the bottom row has the smallest diagonal
dominance of $-2$.  Subtracting off a 1 from each entry gives
\begin{equation*}
  A - 1\cdot \om = \left(\begin{array}{ccc} 5&0&2 \\ 0&6&3 \\2&3&4\end{array}\right),
\end{equation*}
and now the bottom row has a diagonal dominance of $-1$.  From here we
are not able to deduce that $A$ is positive definite.  However, as it
turns out, for all $x\in(2,10/3)$, the matrix $A-x\om$ has a positive
diagonal dominance for each row, and by the techniques described in
this paper we can compute that the optimal choice is $x=3$:
\begin{equation*}
  A - 3\cdot \om = \left(\begin{array}{ccc} 3&-2&0 \\ -2&4&1 \\0&1&2\end{array}\right),
\end{equation*}
and each row has a diagonal dominance of $1$.  Since $3\cdot\om$ is
positive semi-definite, this guarantees that $A$ is positive definite
and, in fact, has a spectral gap of at least $1$.  In fact, we can
compute directly that the eigenvalues of $A$ are $(11.4704, 5.39938,
1.13026)$, so this it not too bad of a bound for this case.

At first it might be counter-intuitive that we should subtract off
entries and cause some off-diagonal entries to become negative, since
the $R_i(A)$ is a function of the absolute value of the off-diagonal
terms and making these entries more negative can actually make the
bound from the $i$th row worse.  However, there is a coupling between
the rows of the matrix, since we are looking for the minimal diagonal
dominance across all of the rows: it might help us to make some rows
worse, as long as we make the worst row better.  In this example,
increasing $x$ past $1$ does worsen the diagonal dominance of the
first row, but it continues to increase the dominance of the third
row, and thus the minimum is increasing as we increase $x$.  Only
until the rows pass each other should we stop, and this happens at
$x=3$.  From this argument it is also clear that the optimal value of
$x$ must occur when two rows have an equal diagonal dominance.

The topic of this paper is to study the possibility of using the idea
of shifting the entries of the matrix uniformly to obtaining better
eigenvalue bounds.  Note that this technique is potentially applicable
to any eigenvalue bounding method that is a function of the entries of
the matrix, and we consider examples other than Gershgorin as well. We
show that using Gershgorin, which is piecewise linear in the
coefficients of the matrix, the technique boils down to the
simultaneous optimization of families of piecewise linear functions,
so can always be solved, e.g. by conversion to a linear program. We
will also consider ``nonlinear'' versions of Gershgorin theorems and
shift these as well, but we see that for these cases we can lose
convexity and in general this problems are difficult to solve.

We show below that for many classes of matrices, the ``shifted
Gershgorin'' gives significantly better bounds than many of the
unshifted nonlinear methods that are currently known.

\section{Notation and background}\label{sec:bg}

Throughout the
remainder of this paper, all matrices are symmetric.

\newcommand{\method}[1]{{\mathsf{{{#1}}}}}
\newcommand{\lb}[2]{\lambda_{\method{{#1}}}({#2})}
\newcommand{\rb}[2]{\rho_{\method{{#1}}}({#2})}
\newcommand{\ls}[2]{\lambda^{\mathsf{S}}_{\method{{#1}}}({#2})}
\newcommand{\rs}[2]{\rho^{\mathsf{S}}_{\method{{#1}}}({#2})}

\begin{define}
  Let $\method{Q}$ be a method that gives upper and lower bounds on
  the eigenvalues of a matrix as a function of its entries.  We denote
  by $\lb Q A$ as the greatest lower bound method $\method{Q}$ can
  give on the spectrum of $A$, and by $\rb Q A$ the least upper bound.
\end{define}

Here we review several known methods.  There are many methods known
that we do not mention here; a very comprehensive and engaging review
of many methods and their histories is contained in~\cite{Varga.book}.

\be

\ii {\bf Gershgorin's Circles.}~\cite{Gersh} As mentioned in the
introduction, define

\begin{equation*}
  R_i(A) = \sum_{j\neq i} \av{a_{ij}},\quad D_i = \{z\in\C:\av{z-a_{ii}} < R_i(A)\}.
\end{equation*}
Then $\Spec(A) \subseteq \cup_i D_i$.  For this method,
we obtain the bounds
\begin{equation}\label{eq:lrGersh}
  \lb G A  =   \min_i (a_{ii} - R_i(A)),\quad \rb G A = \max_i (a_{ii} + R_i(A)).
\end{equation}

\ii {\bf Brauer's Ovals of Cassini.}~\cite{Ostrowski.37},~\cite[eqn. (21)]{Brauer.47} Using the
definitions of $R_i(A)$ as above, let us define
\begin{equation*}
  B_{ij} = \{z\in\C:\av{z-a_{ii}}\av{z-a_{jj}} \le R_i(A)R_j(A)\}.
\end{equation*}
Then $\Spec(A) \subseteq \cup_{i\neq j} B_{i,j}$.  It can be
shown~\cite[Theorem 2.3]{Varga.book} that this method always gives a
nonworse bound than Gershgorin (note for example that choosing $i=j$
gives a Gershgorin disk).  The bounds come from the left- or
right-hand edges of this domain, so that $z-a_{ii},z-a_{jj}$ have the
same sign.  This gives
\begin{equation*}
  a_{ii}a_{jj} - R_i(A)R_j(A) - (a_{ii}+a_{jj}) z + z^2 \le 0.
\end{equation*}
The roots of this quadratic are
\begin{equation*}
  \frac{a_{ii}+a_{jj}}2 \pm \sqrt{(a_{ii}-a_{jj})^2 + R_i(A)R_j(A)}.
\end{equation*}
Therefore we have

\begin{align}
  \lb B A &= \min_{i\neq j }\l(\frac{a_{ii}+a_{jj}}2 - \sqrt{(a_{ii}-a_{jj})^2 + R_i(A)R_j(A)}\r),\label{eq:lBrauer}\\
  \rb B A &= \max_{i\neq j }\l(\frac{a_{ii}+a_{jj}}2 + \sqrt{(a_{ii}-a_{jj})^2 + R_i(A)R_j(A)}\r),\label{eq:rBrauer}
\end{align}

\ii {\bf Melman method.}  According to Theorem~2 of~\cite{Melman.10}, if we define
\begin{equation*}
  \Omega_{ij}(A)
  = \{z\in\C:\av{(z-a_{ii})(z-a_{jj}) - a_{ij}a_{ji}} \le \av{z-a_{jj}}\sum_{k\neq i,j}\av{a_{ik}} + \av{a_{ij}}\sum_{k\neq i,j}\av{a_{jk}}\},
\end{equation*}
then 
\begin{equation*}
  \Spec(A) \subseteq \bigcup_{i=1}^n \bigcap_{j\neq i}\Omega_{ij}(A).
\end{equation*}
From this we can obtain bounds $\ls M A, \rs M A$ similar to those
in~(\ref{eq:lBrauer},~\ref{eq:rBrauer}).

\ii {\bf Cvetkovic--Kostic--Varga method.}  Let $S$ be a nonempty set
of $[n]$, and $\overline{S} = [n]\setminus S$, and let 
\begin{equation*}
  R_i^S(A) = \sum_{j\in S, j\neq i}\av{a_{ij}},\quad R_i^{\overline S}(A) = \sum_{j\not\in S, j\neq i}\av{a_{ij}}.
\end{equation*}
Further define
\begin{align*}
  \Gamma_i^S(A)&= \l\{z\in\C:\av{z-a_{ii}}\le R_i^S(A)\r\},\\V_{ij}^S(A) &= \l\{z\in C:(\av{z-a_{ii}}-R_i^S(A))(\av{z-a_{jj}}-R_j^{\overline S}(A)) \le R_i^{\overline{S}}(A) R_j^S(A)\r\},
\end{align*}

According to Theorem~6 of~\cite{Cvetkovic.Kostic.Varga.04},
\begin{equation*}
  \Spec(A) \subseteq \l(\bigcup_{i\in S} \Gamma_i^S\r)\cup \l(\bigcup_{i\in S,j\in\overline{S}}V_{ij}^S(A)\r).
\end{equation*}
\ee

\begin{define}
  Let $\method{Q}$ be a method as above, and let us define the {\bf
    shifted-$\method{Q}$ method} given by
\begin{equation*}
  \ls QA = \sup_{x\ge 0} \lb Q{A-x\om},\quad  \rs QA = \inf_{x\le 0} \rb Q{A-x\om}.
\end{equation*}
Note that the domain of optimization is $x\ge0$ in one definition and
$x\le 0$ in the other.  For example $\ls Q A$ is the best lower bound
that we can obtain by method $\method{Q}$ on the family given by
subtracting a positive number from the entries of $A$, whereas $\rs Q
A$ is the best upper bound obtained by adding a positive number.
\end{define}

\begin{rem}
  We will focus our study on the first two methods in the interest of
  brevity, i.e. we consider $\method{Q} =\method{G,B}$ for Gershgorin
  and Brauer.  However, we present all four above as representative of
  a truly large class of methods that share a common feature: the
  bounds are always a function of the diagonal entries and a sum of
  (perhaps a subset) of the absolute values of the off-diagonal terms
  on each row.  As we show below in Section~\ref{sec:Gershthm}, the
  shifted bounds are a function not just of sums of off-diagonal
  terms, but a function of each individual entry.  As such, it is not
  surprising that shifted bounds can in some cases vastly outperform
  the unshifted bounds --- we will see even that shifted Gershgorin
  can outperform even unshifted nonlinear bounds such as Brauer.
\end{rem}

\begin{lem}\label{lem:shift}
  If $\method{Q}$ is a method that bounds the spectrum of a matrix,
  then the spectrum of $A$ is contained in the interval $[\ls Q A,\rs
  Q A]$, i.e. the scaled bounds are good upper and lower bounds for
  the eigenvalues.
\end{lem}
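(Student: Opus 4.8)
The plan is to reduce the statement to two facts: first, that $\om$ is positive semidefinite, and second, that $\method{Q}$ is by hypothesis a valid bounding method, so that $\Spec(B)\subseteq[\lb Q B,\rb Q B]$ for every symmetric $B$. The all-ones matrix $\om$ has rank one, with eigenvalues $n$ and $0$ (the latter with multiplicity $n-1$), hence $\om\succeq0$. Consequently $x\om\succeq0$ for every $x\ge0$, while $-x\om\succeq0$ for every $x\le0$. I would combine this with Weyl's inequality for the extreme eigenvalues of a sum of symmetric matrices to compare the true spectrum of $A$ with that of each shifted matrix $A-x\om$.

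For the lower bound, I would fix $x\ge0$ and write $A=(A-x\om)+x\om$. Since $x\om\succeq0$, its smallest eigenvalue is nonnegative, so Weyl's inequality gives $\lambda_{\min}(A)\ge\lambda_{\min}(A-x\om)$. Because $\method{Q}$ is a valid lower-bounding method applied to the matrix $A-x\om$, we have $\lb Q{A-x\om}\le\lambda_{\min}(A-x\om)$, and chaining these two inequalities yields $\lb Q{A-x\om}\le\lambda_{\min}(A)$. As this holds for every $x\ge0$, passing to the supremum over $x\ge0$ gives $\ls Q A\le\lambda_{\min}(A)$, so every eigenvalue of $A$ is at least $\ls Q A$.

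The upper bound is entirely symmetric. Fixing $x\le0$ and writing $A-x\om=A+(-x)\om$ with $(-x)\om\succeq0$, Weyl's inequality gives $\lambda_{\max}(A-x\om)\ge\lambda_{\max}(A)$; validity of $\method{Q}$ as an upper bound for $A-x\om$ gives $\lambda_{\max}(A-x\om)\le\rb Q{A-x\om}$, so that $\lambda_{\max}(A)\le\rb Q{A-x\om}$ for all $x\le0$. Taking the infimum over $x\le0$ yields $\lambda_{\max}(A)\le\rs Q A$. Combining the two directions shows $\Spec(A)\subseteq[\ls Q A,\rs Q A]$, as claimed.

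There is no substantial obstacle here; the entire content lies in aligning the monotonicity directions with the one-sided optimization domains. The only point requiring care is the bookkeeping of signs: the supremum defining $\ls Q A$ is taken over $x\ge0$ precisely because that is the regime in which $x\om$ is a positive-semidefinite perturbation, which can only raise $\lambda_{\min}$, and dually the infimum defining $\rs Q A$ is taken over $x\le0$. A secondary subtlety worth noting is that the supremum and infimum need not be attained, but this is immaterial, since the relevant inequality holds for each admissible $x$ and therefore passes to the extremal value.
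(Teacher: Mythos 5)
Your argument is correct and follows essentially the same route as the paper: the paper derives the needed monotonicity $\lambda_{\min}(A)\ge\lambda_{\min}(A-x\om)$ for $x\ge0$ (and its dual) from the Courant minimax characterization of the extreme eigenvalues, which is precisely the special case of Weyl's inequality you invoke, and then chains it with the validity of $\method{Q}$ before passing to the supremum or infimum. The sign bookkeeping in your write-up matches the paper's.
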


\begin{proof}
  The proof of this follows from the Courant Minimax Theorem~\cite[\S
  4.2]{Horn.Johnson.book},~\cite[\S 4]{Courant.Hilbert.book}: Let $A$
  be a symmetric $n\times n$ matrix, and let $\lambda_1(A)\le
  \lambda_2(A)\le\dots\le \lambda_n(A)$ be the eigenvalues, then we
  have
  \begin{equation*}
    \lambda_1(A) = \min_{v\neq 0}\frac{\ip{Av}v}{\ip vv},\quad   \lambda_n(A) = \max_{v\neq 0}\frac{\ip{Av}v}{\ip vv}.
  \end{equation*}
  If we write $A = B + x\om$, then
  \begin{equation}\label{eq:l1a}
    \lambda_1(A) = \min_{v\neq 0}\frac{\ip{(B+x\om)v}v}{\ip vv} \ge \min_{v\neq 0}\frac{\ip{Bv}v}{\ip vv} + \min_{v\neq 0}\frac{\ip{x\om v}v}{\ip vv}.
  \end{equation}
  If $x\ge 0$, then the second term is zero (in fact, $\om$ is a
  rank-one matrix with top eigenvalue $n$), and thus we have
  $\lambda_1(A) \ge \lambda_1(B)$.  Since $B = A-x\om$, we have for all $x\ge 0$,
  \begin{equation*}
    \lambda_1(A) \ge \lambda_1(A-x\om) \ge \lb Q{A-x\om},
  \end{equation*}
  and the same is true after taking the supremum.  Similarly,
  \begin{equation*}
    \lambda_n(A) \le \max_{v\neq 0}\frac{\ip{Bv}v}{\ip vv}+ \max_{v\neq 0}\frac{\ip{x\om v}v}{\ip vv},
  \end{equation*}
  and if $x\le 0$, the second term is zero.  Therefore, for all $x\le 0$, 
\begin{equation*}
  \lambda_n(A) \le \lambda_n(A-x\om) \le \rb Q {A-x\om},
\end{equation*}
and the same remains true after taking the infimum of the right-hand side.
\end{proof}

It is clear from the definition that $\ls Q A \ge \lb Q A$ and $\rs Q
A \le \rb Q A$ for any method $\method Q$ and any matrix $A$.  In
short, shifting cannot give a worse estimate, and by
Lemma~\ref{lem:shift} this improved estimate is still a good bound.
Moreover, as long as
\begin{equation*}
  \l.\l(\frac{d}{dx} \lb Q {A-x\om}\r)\r|_{x=0+} > 0,
\end{equation*}
then $\ls Q A > \lb Q A$, and similarly for the upper bounds.
Moreover, we show in many contexts that the bounding functions are
convex in $x$, so the converse is also true in many cases.

\begin{rem}\label{rem:wrongway}
  If we re\"{e}xamine the proof of Lemma~\ref{lem:shift}, we might
  hope that we could obtain information on a larger domain of shifts;
  for example, in~\eqref{eq:l1a}, when $x<0$, we have $\min_{x\neq 0}
  \ip{x\om v }v = xN\ip vv$, so we have for $x\le 0$,
  \begin{equation}\label{eq:xle0}
    \lambda_1(A) \ge \lambda_1(A-x\om) + xN.
  \end{equation}
  We might hope that this could give useful information even when
  $x\le 0$, but some calculation can show that (at least for all of
  the methods described in this paper) the first term on the
  right-hand side cannot grow fast enough to make the right-hand side
  of this inequality increase as $x$ decreases, so this will always
  give a worse bound than just restricting to $x\ge 0$.  For example,
  for Gershgorin, the absolute best case is that all of the signs
  align and the edge of the Gershgorin disk is moving to the right at
  rate $Nx$, but this is exactly counteracted by the second term on
  the right-hand side of~\eqref{eq:xle0}.
\end{rem}

\section{Shifted Gershgorin bounds}\label{sec:Gershthm}

In this section, we concentrate on the shifted Gershgorin method.
Recall that the standard Gershgorin bounds are given by
\begin{equation*}
  \lb G A  =   \min_i (a_{ii} - R_i(A)),\quad \rb G A = \max_i (a_{ii} + R_i(A)).
\end{equation*}
Let us define
\begin{equation}\label{eq:defofds}
  d_i(A,x) = a_{ii} - x -\sum_{j\neq i} \av{a_{ij}-x}, \quad   D_i(A,x) = a_{ii} - x +\sum_{j\neq i} \av{a_{ij}-x},
\end{equation}
then 
\begin{equation*}
  \ls G A = \sup_{x\ge 0}\min_i d_i(A,x),\quad \rs G A = \inf_{x\le 0}\max_i D_i(A,x).
\end{equation*}

\begin{thm} \label{thm:gersh} 
 The main results for the shifted Gershgorin are as follows:

\bi

\ii {\bf (Local Improvement.)}  Iff, for every $i$ with $d_i(A,0) =
\lb GA$, row $i$ of the matrix $A$ has at least $\lfloor n/2-1\rfloor$
positive numbers, then $\ls GA>\lb GA$.  Iff, for every $i$ with
$D_i(A,0) = \rb GA$, row $i$ of the matrix $A$ has at least $\lfloor
n/2-1\rfloor$ negative numbers, then $\rs GA< \rb GA$.

\ii {\bf (Global bounds.)}  Each of the functions $\lb G{A-x\om}, \rb
G{A-x\om}$ can be written as a single piecewise-linear function.
Alternatively, we can write these functions as cut out by $n$ lines,
i.e.
\begin{equation}\label{eq:singlefunction}
  \lb G {A-x\om} = \min_{k=1}^n (r_k x + s_k),\quad \rb G {A-x\om} = \max_{k=1}^n (R_kx + S_k),
\end{equation}
where the constants in the above expressions are given in Definition~\ref{def:rsk};

\ii {\bf (Convexity.)}  It follows from the previous that the
functions $-\lb G {A-x\om}$ and $\rb G {A-x\om}$ are convex, i.e. $\lb
G {A-x\om}$ is ``concave down'' and $\rb G{A-x\om}$ is ``concave up''
in $x$.  From this it follows that the minimizer is attained at a
unique value, or perhaps on a unique closed interval.

\ei

\end{thm}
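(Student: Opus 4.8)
The plan is to establish the ``Global bounds'' claim first, since both ``Convexity'' and ``Local Improvement'' follow from it; I treat the lower bound $\lb G{A-x\om}=\min_i d_i(A,x)$, the upper bound being symmetric via $D_i(A,x)=-d_i(-A,-x)$, so that $\rs G A=-\ls G{-A}$. For each fixed row $i$, the map $x\mapsto d_i(A,x)$ of \eqref{eq:defofds} is the affine function $a_{ii}-x$ plus the terms $-\av{a_{ij}-x}$, each concave and piecewise linear with a single kink at $x=a_{ij}$; hence $d_i(A,\cdot)$ is concave piecewise linear. Off its kinks its slope is $-1+\sum_{j\neq i}\operatorname{sgn}(a_{ij}-x)$, which equals $n-2k$ precisely when $k-1$ of the off-diagonal entries of row $i$ lie below $x$. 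Thus the slope drops by $2$ at each kink and ranges over the $n$ integer values $n-2,n-4,\dots,-n$ as $x$ increases.

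First I would use that a concave piecewise-linear function is the pointwise minimum of the affine extensions of its pieces, so $d_i(A,x)=\min_{k=1}^n \ell_{i,k}(x)$, where $\ell_{i,k}$ has slope $n-2k$; sorting row $i$'s off-diagonal entries as $a_{i\s(1)}\le\dots\le a_{i\s(n-1)}$, its intercept is $a_{ii}+\sum_{l<k}a_{i\s(l)}-\sum_{l\ge k}a_{i\s(l)}$. Then $\lb G{A-x\om}=\min_{i,k}\ell_{i,k}(x)$ is the lower envelope of $n^2$ lines. The key reduction is that these lines realize only $n$ distinct slopes, and among all lines of a common slope only the one of least intercept can ever be the minimum; discarding the rest leaves exactly one line per slope. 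This collapses the envelope to a minimum of $n$ lines, yielding \eqref{eq:singlefunction} with $r_k=n-2k$ and $s_k=\min_i(a_{ii}+\sum_{l<k}a_{i\s(l)}-\sum_{l\ge k}a_{i\s(l)})$, which is the content of Definition~\ref{def:rsk}. ``Convexity'' is then immediate, since a minimum of affine functions is concave, so $\lb G{A-x\om}$ is concave (and, by the symmetry above, $\rb G{A-x\om}$ is convex); a concave function attains its maximum on a point or a single closed interval.

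For ``Local Improvement'' I would combine concavity with the behaviour at the origin. Writing $g(x)=\lb G{A-x\om}$, concavity gives $\ls G A=\sup_{x\ge0}g(x)>g(0)=\lb G A$ if and only if the right derivative $g'(0^+)>0$: a positive right derivative forces an initial increase, whereas $g'(0^+)\le0$ makes the concave $g$ nonincreasing on $[0,\infty)$, so the supremum sits at $x=0$. Since the right derivative of a finite minimum is the minimum of the right derivatives over the active set $I_0=\{i:d_i(A,0)=\lb G A\}$, we have $g'(0^+)=\min_{i\in I_0}\partial_x^{+}d_i(A,0)$. A direct sign count gives $\partial_x^{+}d_i(A,0)=2p_i-n$, where $p_i$ is the number of strictly positive off-diagonal entries of row $i$ (entries equal to $0$ counting with the negative ones). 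Requiring this to be positive for every active row is exactly the stated condition that each row realizing the Gershgorin minimum carry the prescribed number of positive off-diagonal entries; the upper-bound statement follows from the $A\mapsto-A$ symmetry, which turns positive entries into negative ones.

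The main obstacle is justifying the combinatorial collapse from $n^2$ to $n$ lines together with its degeneracies. One must verify that discarding every non-minimal intercept at a given slope is harmless (it is, because such a line lies weakly above another of the same slope everywhere), and then handle ties cleanly: repeated off-diagonal values within a row merge kinks and can skip slopes, entries coinciding with $x$ put one exactly at a breakpoint, and several rows may share the active minimum. These same ties resurface in the derivative step, where $d_i$ is only one-sided differentiable at its kinks; confining attention to right derivatives --- and correspondingly to the half-line $x\ge0$ supplied by Lemma~\ref{lem:shift} --- is precisely what makes the local criterion a genuine ``if and only if'' rather than a one-way implication.
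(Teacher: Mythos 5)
Your route is essentially the paper's: you decompose each $d_i(A,\cdot)$ as the lower envelope of $n$ lines with slopes $n-2k$ (the content of Lemma~\ref{lem:dD}, which the paper proves by checking directly that each line dominates $d_i$ and touches it, whereas you invoke the general fact that a concave piecewise-linear function is the minimum of the affine extensions of its pieces --- an equivalent justification); you collapse the $n^2$ lines to $n$ by keeping only the least intercept at each slope (the paper's interchange of $\min_i$ and $\min_k$); and you obtain concavity and the right-derivative criterion at $x=0^+$ exactly as the paper does. The reduction of the upper-bound statements to the lower-bound ones via $A\mapsto -A$ is also fine.

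The one place the argument does not close is the final step of the local-improvement part. Your computation $\partial_x^{+}d_i(A,0)=2p_i-n$ is correct, and it says that the active row $i$ contributes a positive slope iff $p_i>n/2$, i.e.\ iff row $i$ has at least $\lfloor n/2\rfloor+1$ strictly positive off-diagonal entries. That is \emph{not} ``exactly the stated condition'': the theorem asks only for at least $\lfloor n/2-1\rfloor=\lfloor n/2\rfloor-1$ positive entries, which is weaker by two. Concretely, for $n=4$ and an active row with off-diagonal entries $(1,-1,-1)$, the stated hypothesis holds ($1\ge 1$) but the right slope is $2\cdot 1-4=-2<0$, so no improvement occurs. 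So you cannot finish by declaring agreement with the statement; your (correct) derivative computation in fact shows that the threshold in the statement is wrong. (The paper's own proof has the same defect: it passes from ``$n-2k>0$'' to ``$y_{\lceil n/2\rceil}>0$'' and then to ``$n-1-\lceil n/2\rceil$ positive entries'' with two off-by-one slips; the condition its own computation supports is $\lfloor n/2\rfloor+1$.) Everything else in your write-up --- the handling of ties, the active-set formula for the right derivative of a minimum, and the ``iff'' via concavity --- is sound.
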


\begin{rem}\label{rem:Gersh}
  The local improvement part of the theorem above is basically telling
  us that we have to have enough terms of the correct sign to allow
  the shifting to improve the estimates.  If a row has many more
  positive terms than negative terms, for example, then subtracting
  the same number from each entry improves the Gershgorin bound, since
  it decreases the off-diagonal sum --- from this we see that we would
  want the row(s) which are the limiting factor in the $\lb GA$
  calculation to have enough positive terms.  In particular, it
  follows that if there are enough terms of both signs, shifting can
  improve both sides of the bound.

  One quick corollary of this theorem is that if we have a matrix with
  all positive entries (or even all positive off-diagonal entries),
  then shifting is guaranteed to improve the lower bound, yet cannot
  improve the upper bound.  In this sense, our theorem is in a sense a
  counterpoint to the Perron--Frobenius theorem: the bound on the
  spectral radius of a positive matrix is exactly the one given by
  Perron--Frobenius, but we always obtain a good bound on the least
  eigenvalue.

  The global bound part of this theorem tells us that we can write the
  objective function as a single piecewise-linear function, or as an
  extremum of at most $n$ different linear functions.  This more or
  less allows us to write down the optimal bound in closed form, see
  Corollary~\ref{lem:doublemin} below.  In any case, the optimal bound
  can be obtained by a linear program.
\end{rem}

\begin{define}\label{def:rsk}
  Let $A$ be an $n\times n$ symmetric matrix.  We define $r_k := n-2k,
  R_k= 2k-n-2$ for $k=1,\dots,n$.  For each $i=1,\dots,n$, we
  define $\delta_{i,k}$ as follows: let $y_1 \le y_2\le \dots\le
  y_{n-1}$ be the off-diagonal terms of row $i$ of the matrix $A$ in
  nondecreasing order, and define, for $i,k=1,\dots,n$,
  \begin{equation*}
    \delta_{i,k} :=   \sum_{j < k} y_j - \sum_{j\ge k} y_j,
  \end{equation*}
  and then 
  \begin{equation*}
    s_{i,k} = a_{ii} + \delta_{i,k},\quad S_{i,k} = a_{ii} - \delta_{i,k}.
  \end{equation*}

  Finally, we define
  \begin{equation*}
    s_k = \min_i s_{i,k},\quad S_k = \max_i S_{i,k}.
  \end{equation*}

\end{define}

\begin{lem}\label{lem:dD}
  The functions $d_i(A,x), D_i(A,x)$ are piecewise linear, and can be
  written as a minimizer of a family of linear functions:
\begin{equation*}
  d_i(A,x) = \min_k (r_k x  + s_{i,k}),\quad D_i(A,x) = \max_k (R_kx + S_{i,k}).
\end{equation*}
\end{lem}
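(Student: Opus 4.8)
The plan is to fix a row $i$ and study the scalar function $x\mapsto\sum_{j\neq i}\av{a_{ij}-x}$, which is the only nonlinear ingredient in both $d_i$ and $D_i$ from~\eqref{eq:defofds}. Writing the off-diagonal entries of row $i$ in sorted order $y_1\le y_2\le\dots\le y_{n-1}$ as in Definition~\ref{def:rsk}, this sum is a convex piecewise-linear function of $x$ whose breakpoints are exactly the $y_j$. These breakpoints partition the real line into the $n$ intervals $(-\infty,y_1),(y_1,y_2),\dots,(y_{n-1},\infty)$, which I would index by $k=1,\dots,n$.

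First I would compute the sum explicitly on the $k$-th interval. For $x\in(y_{k-1},y_k)$ every term with $j<k$ contributes $x-y_j$ and every term with $j\ge k$ contributes $y_j-x$, so collecting terms gives $\sum_{j\neq i}\av{a_{ij}-x}=(2k-1-n)x-\delta_{i,k}$, where $\delta_{i,k}=\sum_{j<k}y_j-\sum_{j\ge k}y_j$ is precisely the quantity in Definition~\ref{def:rsk}. Substituting this into~\eqref{eq:defofds} and simplifying yields, on the $k$-th interval, $d_i(A,x)=r_kx+s_{i,k}$ with $r_k=n-2k$ and $s_{i,k}=a_{ii}+\delta_{i,k}$, and likewise $D_i(A,x)=R_kx+S_{i,k}$ with $R_k=2k-n-2$ and $S_{i,k}=a_{ii}-\delta_{i,k}$. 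I would verify the two boundary cases $k=1$ and $k=n$ separately (where one of the two partial sums defining $\delta_{i,k}$ is empty) to confirm that these $n$ linear pieces exactly account for all $n$ intervals.

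The final step converts this piecewise description into the claimed single min/max expression. Since $\sum_{j\neq i}\av{a_{ij}-x}$ is convex, $d_i=a_{ii}-x-\sum\av{a_{ij}-x}$ is concave and $D_i=a_{ii}-x+\sum\av{a_{ij}-x}$ is convex. A concave piecewise-linear function equals the pointwise minimum of its linear pieces, since each piece, extended to all of $\R$, lies weakly above the function; dually a convex one equals the pointwise maximum. Hence $d_i(A,x)=\min_k(r_kx+s_{i,k})$ and $D_i(A,x)=\max_k(R_kx+S_{i,k})$, as claimed. One can sanity-check the ordering by noting that the slopes $r_k=n-2k$ are strictly decreasing in $k$, consistent with the decreasing slopes of a concave function as $x$ increases.

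I expect the only genuine friction to be the bookkeeping: pinning down the off-by-one in the interval/index correspondence, handling the endpoint pieces $k=1,n$, and making sure the signs in $\delta_{i,k}$ agree with Definition~\ref{def:rsk}. The structural content — that an affine function plus or minus a sum of absolute values is convex or concave, and that such a function is the max or min of its affine pieces — is standard, so no real obstacle arises there.
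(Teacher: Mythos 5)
Your proposal is correct and follows essentially the same route as the paper: compute $d_i$ and $D_i$ explicitly on each interval $(y_{k-1},y_k)$ to identify the affine pieces $r_kx+s_{i,k}$ and $R_kx+S_{i,k}$, then pass to the min/max representation. The only cosmetic difference is that you justify the final step by concavity/convexity of piecewise-linear functions, whereas the paper uses the termwise inequalities $\av{x}\ge x$ and $\av{x}\ge -x$ to show each line dominates $d_i$ (which also handles the degenerate case $y_{k-1}=y_k$ directly); both are sound.
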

\begin{proof}
  Define $y$ as in Definition~\ref{def:rsk}. First note that we can
  write
\begin{equation*}
  d_i(A,x) = a_{ii} - x - \sum_{\ell=1}^{n-1} \av{y_\ell-x},
\end{equation*}
since the ordering in the sum does not matter.  Now, note that if
$x\in(y_{k-1},y_k)$ (which could be an empty interval), we have
\begin{align*}
  d_i(A,x) 
  &= a_{ii}-x - \sum_{\ell =1 }^{k-1}(x-y_\ell) - \sum_{\ell = k}^{n-1} (y_\ell-x)\\
  &= a_{ii} + \sum_{\ell=1}^{k-1}y_\ell - \sum_{\ell = k}^{n-1} y_\ell - x - (k-1)x + (n-k)x\\
  &= s_{i,k} + (n-2k)x = s_{i,k} + r_kx.
\end{align*}
In the case where $y_{k-1}=y_k$, it is easy to see that this equality
does not hold on an interval, but it does hold at the point $y_k$.
Finally, noting that $\av{x} \ge x$ and $\av x \ge -x$, this means
that $d_i(A,x)\le r_kx+s_{i,k}$ for all $x$ (note the negative sign in
front of the sum!).  Thus the family $r_kx + s_{i,k}$ dominates
$d_i(A,x)$, and coincides with it on a nonempty set, and this proves
the result.

The argument is similar for $D_i$.  If we write
\begin{align*}
  D_i(A,x) 
  &= a_{ii}-x + \sum_{\ell =1 }^{k-1}(x-y_\ell) + \sum_{\ell = k}^{n-1} (y_\ell-x)\\
  &= a_{ii} - \sum_{\ell=1}^{k-1}y_\ell + \sum_{\ell = k}^{n-1} y_\ell - x + (k-1)x - (n-k)x\\
  &= S_{i,k} + (2k-n-2)x = S_{i,k} + R_kx,
\end{align*}
and the remainder follows by taking maxima.
\end{proof}

{\noindent \bf Proof of Theorem~\ref{thm:gersh}.}

First we prove the ``global bounds'' part of the theorem. From  Lemma~\ref{lem:dD},
\begin{equation*}
  \ls G A = \sup_{x\ge0} \min_i d_i(A,x) = \sup_{x\ge0} \min_i \min_k (r_k x + s_{i,k} ) = \sup_{x\ge0}\min_k (r_k x + \min_i s_{i,k}) = \sup_{x\ge0}\min_k (r_k x +s_k),
\end{equation*}
and we are done.  The proof is the same for $\rs G A$.

The argument for convexity is similar.  We have
\begin{equation*}
  \lb G {A-x\om}= \min_i d_i(A,x)=\min_i \min_k (r_k x + s_{i,k} ). 
\end{equation*}
As a minimizer of linear functions, it follows that $\lb G{A-x\om}$ is
concave down, and similarly, as $\rb G{A-x\om}$ is the maximizer of
linear functions, it is concave up.

Finally, we consider the ``local improvement'' statement.  Choose and
fix a row $i$, and define $y$ as in Definition~\ref{def:rsk}.  If all
of the entries of $y_k$ are positive, then for $x < y_1$, $d_i(A,x) =
(n-2)x + \sum_k y_k$, and thus has a positive slope at $x=0$.
Otherwise, let us write $y_{k-1} \le 0 \le y_{k}$.  If $n-2k >0$, then
$d_i(A,x)$ has a positive slope at $x=0$.  Thus, as long as
$y_{\lceil{n/2}\rceil}>0$, then $d_i(A,x)$ has a positive slope at
$0$.  In short, $d_i(A,x)$ has a positive slope at zero iff at least
$n-1-\lceil{n/2}\rceil = \lfloor{n/2-1}\rfloor$ of the entries of $y$
are positive.  Since $\lb G{A-x\om}$ is the minimum of the $d_i$, as
long as this is true for every row which minimizes the quantity in a
neighborhood of zero, then $\lb G{A-x\om}$ is itself increasing at
$x=0$.

\qed

\begin{cor}\label{lem:doublemin}
  Let $A$ be an $n\times n$ symmetric matrix and consider a set of pairs of indices defined by
\begin{equation*}
  Q = \{(k,l): k \le n/2, l > n/2, s_l \ge s_k\}.
\end{equation*}
  Then if $Q\neq\emptyset$, then $\ls G A > \lb G A$ and 
\begin{equation}\label{eq:bkbl}
  \ls G A = \min_{(k,l)\in Q} \l(\frac{(n/2-k)s_l + (l-n/2)s_k}{l-k}\r),
\end{equation}
and if $Q=\emptyset$ then $\ls G A = \lb G A$.  Moreover, the minimum
of $\lb G {A-x\om}$ occurs at $x = (s_l-s_k)/(2(l-k))$, where $k,l$
are chosen so that~\eqref{eq:bkbl} is minimized.  Also, if the
(off-diagonal) terms of $A$ are all positive, then the set $Q$ is the
set $\{(k,l):k\le n/2, l> n/2\}$.
\end{cor}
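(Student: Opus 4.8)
The plan is to reduce the entire statement to the one-variable optimization $\ls G A=\sup_{x\ge0}g(x)$, where by the global-bounds part of Theorem~\ref{thm:gersh} together with Lemma~\ref{lem:dD} we have $g(x):=\lb G{A-x\om}=\min_{k=1}^n(r_kx+s_k)$ with $r_k=n-2k$. Since $r_k$ is strictly decreasing in $k$, the lines of positive slope are exactly those with $k<n/2$ and those of negative slope exactly those with $k>n/2$; as $g$ is concave with $g(x)\to-\infty$ as $x\to\pm\infty$, it attains a finite maximum, achieved either at an interior peak or at the boundary $x=0$. I would first record the two elementary facts used throughout: for any pair $(k,l)$ with $r_k\ge0\ge r_l$ one has $g(x)\le\min(r_kx+s_k,\,r_lx+s_l)$ for all $x$, and this ``tent'' is maximized at the crossing point $x=(s_l-s_k)/(2(l-k))$ with value exactly $\big((n/2-k)s_l+(l-n/2)s_k\big)/(l-k)$. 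This single computation both produces the claimed formula and identifies the claimed optimal $x$.

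From the envelope bound the upper inequality is immediate: for every $(k,l)\in Q$ the crossing lies at $x\ge0$ (because $s_l\ge s_k$), so $\ls G A=\sup_{x\ge0}g\le\big((n/2-k)s_l+(l-n/2)s_k\big)/(l-k)$, and hence $\ls G A\le\min_{(k,l)\in Q}(\cdots)$. For the reverse inequality I would locate the interior peak of $g$ explicitly: it occurs where the active slope changes sign, i.e. at the intersection of the last active line $L_k$ with $k\le n/2$ and the first active line $L_l$ with $l>n/2$ (the zero-slope case $k=n/2$ giving a flat maximum of height $s_{n/2}$, consistent with the formula). If this peak sits at $x\ge0$ it is feasible, the corresponding pair satisfies $s_l\ge s_k$ and so lies in $Q$, and $g$ attains the stated value there, giving $\ls G A\ge\min_{(k,l)\in Q}(\cdots)$ and closing the equality; a peak at $x>0$ forces $g(0)<g(x^*)$, yielding the strict improvement.

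For the case $Q=\emptyset$ I would argue contrapositively: if the peak of $g$ were at some $x\ge0$, the peak-defining pair would belong to $Q$, so $Q=\emptyset$ forces the peak strictly left of the origin, whence $g$ is nonincreasing on $[0,\infty)$ and $\ls G A=g(0)=\lb G A$. Finally, the all-positive-entry statement reduces to monotonicity of the $s_k$: using $\delta_{i,k+1}-\delta_{i,k}=2y_k$ (the $k$-th smallest off-diagonal entry of row $i$, in the notation of Definition~\ref{def:rsk}), each sequence $s_{i,\cdot}$ is nondecreasing when the entries are positive, hence so is $s_k=\min_i s_{i,k}$; thus $s_l\ge s_k$ holds automatically whenever $k<l$, and $Q$ collapses to $\{(k,l):k\le n/2,\ l>n/2\}$.

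The main obstacle is the matching (lower-bound) step: showing that $\min_{(k,l)\in Q}(\cdots)$ is genuinely attained by $g$ at a feasible point, rather than merely dominating it. A clean way to force this is linear-programming duality applied to $\max\{t:\ t\le r_kx+s_k\ \forall k,\ x\ge0\}$, whose dual is $\min\{\sum_k\mu_ks_k:\ \mu\ge0,\ \sum_k\mu_k=1,\ \sum_k\mu_kr_k\le0\}$, with optimum supported on at most two indices; if $\arg\min_j s_j$ contains an index of nonpositive slope then the single-atom solution is feasible and $\ls G A=\lb G A$, and otherwise the optimal $\mu$ sits on one positive- and one negative-slope index, reproducing the crossing value. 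The delicate point I would flag is exactly this dichotomy: membership $(k,l)\in Q$ constrains only the two lines $L_k,L_l$ and does not by itself prevent a third line from dipping below their tent at the crossing, so one must show that $Q\neq\emptyset$ coincides with $\min_j s_j$ being attained only at indices of positive slope (the same positivity-of-slope condition underlying the local-improvement part of Theorem~\ref{thm:gersh}); care is needed here because a pair can enter $Q$ without being active at the true optimum, and this is precisely the step where the argument must be made watertight.
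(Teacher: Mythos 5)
Your first two paragraphs are essentially the paper's own proof (upper bound from the two-line ``tent'', optimum at the crossing of a nonnegative-slope and a nonpositive-slope line), carried out with more care than the paper gives; the paper simply asserts that ``the minimum such intersection gives us our answer.'' The delicate point you flag at the end --- that membership of $(k,l)$ in $Q$ only controls the two lines $L_k,L_l$ and does not prevent a third line from cutting below their tent --- is not merely a step that needs tightening: it is a genuine gap, and in fact the statement as written is false, so no argument (yours or the paper's) can close it. Concretely, take
\begin{equation*}
  A=\begin{pmatrix} 0 & -1 & 2 & 2\\ -1 & 6 & -1 & -10\\ 2 & -1 & 1000 & 0\\ 2 & -10 & 0 & 1000\end{pmatrix},
\end{equation*}
so $n=4$ and, computing Definition~\ref{def:rsk} row by row (rows $3,4$ contribute only huge values), one finds $s=(-3,-5,-4,-6)$. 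Here $Q=\{(2,3)\}$ since $s_3=-4\ge s_2=-5$ and no other pair qualifies, and formula~\eqref{eq:bkbl} returns $-5$ together with the claim $\ls G A>\lb G A$. But $d_2(A,x)=6-x-2\av{-1-x}-\av{-10-x}=-6-4x$ for $x\ge 0$, so $\lb G{A-x\om}\le -6$ for all $x\ge 0$ and $\ls G A=-6=\lb G A$. The mechanism is exactly the one you warned about: $\min_j s_j$ is attained only at $j=4>n/2$, so the line $L_4$ of slope $-4$ lies strictly below the $(2,3)$ tent at its apex, and the pair in $Q$ is never active at the (boundary) optimum.

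Your proposed repair via LP duality is the right one and, if carried through, yields the \emph{correct} statement rather than the stated one: the dual optimum is supported on at most two atoms, and single-atom solutions $\mu=e_l$ with $r_l\le 0$ are always feasible, so the true value is $\min\bigl(\min_{l\ge n/2}s_l,\ \min_{(k,l)\in Q}(\cdots)\bigr)$, not $\min_{(k,l)\in Q}(\cdots)$ alone. (In the example, $\min_{l\ge 2}s_l=-6$ recovers the correct answer.) Equivalently, the corollary becomes true if one adds the hypothesis that every minimizer of $j\mapsto s_j$ has index $j<n/2$ --- which is precisely the local-improvement condition of Theorem~\ref{thm:gersh} --- and even then the strict inequality $\ls G A>\lb G A$ can degenerate to equality when the optimal crossing sits at $x=0$ (i.e.\ $s_k=s_l=\min_j s_j$). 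The last claim of the corollary, which you prove via monotonicity of $k\mapsto s_{i,k}$ for positive entries, is fine and matches the paper. So: same route as the paper, honestly flagged weak point, but the weak point is a real counterexample rather than a missing lemma, and your dual formulation is the correct replacement for~\eqref{eq:bkbl}.
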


\begin{proof}
  First note that if the entries of $A$ are positive, then $s_{i,k} <
  s_{i,k+1}$ for all $i$, and thus $s_k < s_{k+1}$.  From this it
  follows that if $k\le l$ then $s_l\ge s_k$, and the last sentence
  follows.

  From Theorem~\ref{thm:gersh}, in
  particular~\eqref{eq:singlefunction}, it follows that the supremum
  of $\lb G{A-x\om}$ will be attained at the intersection of two
  lines, and moreover it is clear that this must between between a
  line of nonnegative slope ($k\le n/2$) and a line of nonpositive
  slope ($k\ge n/2$).  Finding the intersection point gives
\begin{equation*}
  x = \frac{s_l-s_k}{r_k-r_l}.
\end{equation*}
Since the denominator is positive, we need the numerator be
nonnegative, which gives the condition $s_l \ge s_k$.  Plugging this
point into the line $s_k x + r_k$ gives the expression in parentheses
in~\eqref{eq:bkbl}.  The minimum such intersection gives us our
answer.
\end{proof}

\begin{exa}
  Let $A$ be a $3\times3$ matrix and compute $s_1,s_2,s_3$.  Using
  Corollary~\ref{lem:doublemin}, there are at most only $2 = 1\times
  2$ terms to minimize over: $k=1,l=2$ and $k=1,l=3$.  If $s_1$ is the
  largest, then we cannot improve the lower bound by shifting, and in
  fact we obtain $\lb G A = \ls G A = \min(s_2,s_3)$.  If $s_1$ is the
  smallest (which will always happen, for example, when the
  off-diagonal entries of $A$ are positive), then we obtain
  \begin{equation*}
    \ls G A = \min\l(\frac{s_1+s_2}2,\frac{3s_1 + s_3}4\r).
  \end{equation*}
  If we consider $A$ from~\eqref{eq:56}, since $s_{i,k}$ is independent
of $i$, we can obtain $s_k$ from any row, and we have
\begin{equation*}
  s_1 = 6-5-5=-4,\quad s_2 = 6+5-5 = 6,\quad s_3 = 6+5+5 = 11,
\end{equation*}
and thus
\begin{equation*}
  \ls G A = \min(1, 29/4) = 1.
\end{equation*}
Since the minimum was attained by the $(k=1,l=2)$ pair, we have that
it is attained at $x=(6+4)/2 = 5$.  This matches the analysis in the
introduction.
\end{exa}

\begin{exa}
Choosing $A$ as in~\eqref{eq:Aneg}, we have
\begin{equation*}
  s_{1,k} = \{2,4,10\},\quad s_{2,k} = \{2,4,10\},\quad s_{3,k} = \{-2,6,12\},
\end{equation*}
so that 
\begin{equation*}
  s_1 = -2,\quad s_2 = 4,\quad s_3 = 10.
\end{equation*}
Therefore
\begin{equation*}
  \ls G A = \min(1,1) = 1.
\end{equation*}
We also see that we can choose either pair to find the $x$ at which
this is minimized, using the $(k=1,l=3)$ pair gives at $x =
(10+2)/(2\cdot 2) = 3$.  We could also ask for which $x$ the matrix
$A-x\om$ is guaranteed to be positive definite; note that we have

\begin{equation*}
  \lb G{A-x\om} = \min\{x-2,-x+4,-3x+10\},
\end{equation*}
so it is in the domain $(2,\infty)\cap(-\infty,4)\cap(-\infty,10/3) = (2,10/3)$.

\end{exa}

\section{Shifted Brauer Bounds}\label{sec:nlthm}

In this section we consider the shifted Brauer estimator.  The Brauer
estimate is nonlinear and, moreover, we show that the convexity
properties of shifted Gershgorin no longer apply.  We show that this
adds considerable technical difficulty to the problem, except in
certain cases.  Although we do not study the shift of some of the
other nonlinear estimators from Section~\ref{sec:bg}, we see from the
difficulties that arise for Brauer that it is unlikely for us to
obtain a tractable optimization problem without further assumptions.

We recall from~\eqref{eq:lBrauer} that
\begin{equation*}
    \lb B A = \min_{i\neq j }\l(\frac{a_{ii}+a_{jj}}2 - \sqrt{(a_{ii}-a_{jj})^2 + R_i(A)R_j(A)}\r),
\end{equation*}
and thus
\begin{equation*}
  \ls B A = \inf_{x\ge0}\min_{i\neq j }\l(\frac{a_{ii}+a_{jj}}2 - x -\sqrt{(a_{ii}-a_{jj})^2 + R_i(A-x\om)R_j(A-x\om)}\r).
\end{equation*}
Noting that
\begin{equation*}
  R_i(A-x\om) = \sum_{k\neq i} \av{a_{ik} - x},
\end{equation*}
and writing
\begin{equation*}
  f_{ij}(A,x) = \frac{a_{ii}+a_{jj}}2 - x -\sqrt{(a_{ii}-a_{jj})^2 + \l(\sum_{k\neq i}\av{a_{ik}-x}\r)\l(\sum_{k\neq j} \av{a_{jk}-x}\r)},
\end{equation*}
this simplifies to
\begin{equation}\label{eq:lBshift}
    \ls B A = \inf_{x\ge0}\min_{i\neq j }f_{ij}(A,x).
\end{equation}
This last expression is deceptively complicated, because in fact the
functions $f_{ij}(A,x)$ are not convex, or even very simple.

\begin{exa}
  In Figure~\ref{fig:tworow} we plot a few examples of $f_{ij}(A,x)$
  to get a sense of the shape of such functions.  In each case, we
  specify the diagonal entry, and the set of off-diagonal entries, of
  the first two rows of a matrix, and plot $f_{12}(A,x)$.  For
  example, if
\begin{equation*}
  A_1 = \left(\begin{array}{ccc} 0&1&2 \\ 4&0&5 \\?&?&?\end{array}\right),\quad   A_2 = \left(\begin{array}{ccc} 0&1&4 \\ 2&0&5 \\?&?&?\end{array}\right),
\end{equation*}
we obtain the figures in Figure~\ref{fig:tworow}.  (Note that as long
as we choose the diagonal entries the same, this simply shifts the
function without changing its shape.)  First note that neither
function has the properties from the last section: although the
function in the right frame is close to being concave down, it is not
quite, and clearly the function in the left frame is far from concave
down.

\begin{figure}[ht]
\begin{centering}
  \includegraphics[width=.45\textwidth]{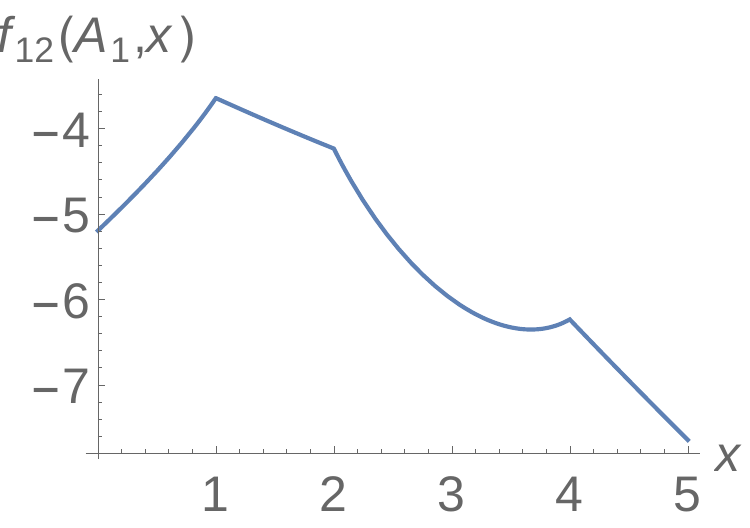}
  \includegraphics[width=.45\textwidth]{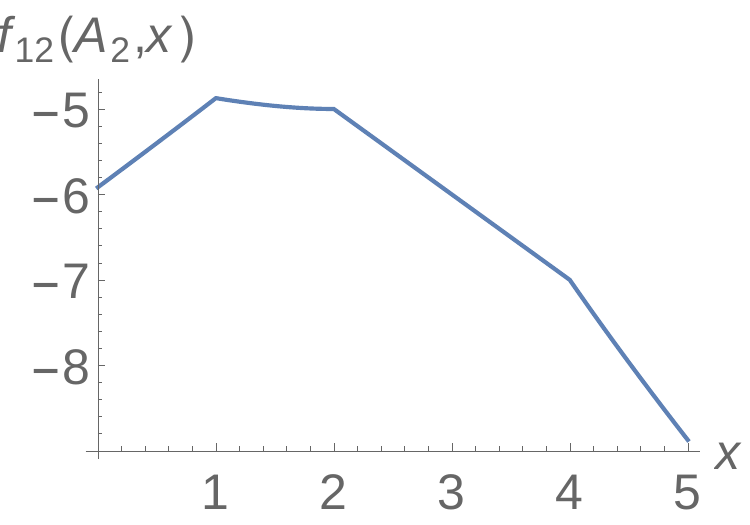}
  \caption{Two plots of $f_{12}(A,x)$ for various choices of entries.}
  \label{fig:tworow}
\end{centering}
\end{figure}

\end{exa}

We could, in theory, write out the expression~\eqref{eq:lBshift} as a
single minimization over a large number of functions.  Consider the
expression for $f_{ij}(A,x)$: the expression under the radical is
piecewise quadratic on any interval between distinct off-diagonal
terms of the two rows, and since $\av x \ge x,-x$, this means that
$f_{ij}(A,x)$ could be written as the minimum of such terms.  However,
this approach will not be as fruitful as it was in the previous
section, since these functions can be both concave up and concave
down.  Moreover, since the bounding functions are nonlinear with a
square root, it is possible that they have a domain strictly smaller
than $x\ge 0$.  In any case, extrema do not have to occur at the
places where the domain of definition shifts.

\newcommand{\ftilde}{d}

However, there is one case where an approximation gives a tractable
minimization problem, which we state below.

\begin{thm}\label{thm:halfGersh}
  Assume that all of the diagonal entries of $A$ are the same, and
  denote this number by $q$.  Then if we define
\begin{equation}\label{eq:tildel}
  \widetilde{\lambda^{\mathsf{S}}_B}(A) = \sup_{x\ge 0}\min_{i\neq j}\left( q - x - \frac12\l(R_i(A-x\om)+R_j(A-x\om)\r)\right),
\end{equation}
then $\widetilde{\lambda^{\mathsf{S}}_B}(A)$ is still a good lower bound for the eigenvalues of $A$.
\end{thm}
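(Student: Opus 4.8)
The plan is to exhibit $\widetilde{\lambda^{\mathsf{S}}_B}(A)$ as a pointwise underestimate of the genuine shifted Brauer bound $\ls B A$, and then to invoke Lemma~\ref{lem:shift} to conclude that it is a valid lower bound for the smallest eigenvalue $\lambda_1(A)$. The only substantive ingredient is a single application of the arithmetic--geometric mean inequality; no convexity or smoothness of the $f_{ij}$ is required, which is exactly why this route avoids the difficulties flagged earlier in this section for the honest shifted Brauer estimator.

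First I would specialize $f_{ij}(A,x)$ to the case of equal diagonal entries. Since $a_{ii}=a_{jj}=q$, the term $(a_{ii}-a_{jj})^2$ vanishes and
\[
  f_{ij}(A,x) = q - x - \sqrt{R_i(A-x\om)\,R_j(A-x\om)}.
\]
Because each $R_i(A-x\om)=\sum_{k\neq i}\av{a_{ik}-x}$ is a sum of absolute values and hence nonnegative, AM--GM gives $\sqrt{R_iR_j}\le \tfrac12(R_i+R_j)$ for every $x\ge 0$. Consequently, writing $d_{ij}(A,x) = q - x - \tfrac12\left(R_i(A-x\om)+R_j(A-x\om)\right)$ for the inner expression appearing in~\eqref{eq:tildel}, we obtain the pointwise comparison $d_{ij}(A,x)\le f_{ij}(A,x)$ for all pairs $i\neq j$ and all $x\ge 0$ (note the sign: the larger quantity $\tfrac12(R_i+R_j)$ is subtracted).

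Next I would propagate this inequality through the two nested optimizations. Pointwise domination survives the minimum over pairs: for each fixed $x$ we have $\min_{i\neq j} d_{ij}(A,x)\le \min_{i\neq j} f_{ij}(A,x)$, and it likewise survives the supremum over $x\ge 0$, since the supremum of a smaller function cannot exceed that of a larger one. This chains to
\[
  \widetilde{\lambda^{\mathsf{S}}_B}(A) = \sup_{x\ge 0}\min_{i\neq j} d_{ij}(A,x) \le \sup_{x\ge 0}\min_{i\neq j} f_{ij}(A,x) = \ls B A.
\]

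Finally I would close the argument using Lemma~\ref{lem:shift} with $\method Q = \method B$, which guarantees $\ls B A \le \lambda_1(A)$. Combining the two displays gives $\widetilde{\lambda^{\mathsf{S}}_B}(A)\le \ls B A\le \lambda_1(A)$, so $\widetilde{\lambda^{\mathsf{S}}_B}(A)$ is indeed a valid lower bound for the spectrum. The one place demanding care is the bookkeeping of inequality directions as they pass through the $\min$ and $\sup$, but there is no genuine obstacle here: the comparison is essentially immediate. The point of the theorem is not difficulty but tractability, in that each $d_{ij}(A,x)$ is piecewise linear in $x$, so the surviving optimization is again amenable to the linear-programming treatment of Section~\ref{sec:Gershthm}.
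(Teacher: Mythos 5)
Your proposal is correct and follows essentially the same route as the paper's own proof: specialize $f_{ij}$ to equal diagonal entries so the $(a_{ii}-a_{jj})^2$ term vanishes, apply AM--GM to bound $\sqrt{R_iR_j}$ by $\tfrac12(R_i+R_j)$, and conclude $\widetilde{\lambda^{\mathsf{S}}_B}(A)\le \ls B A\le\lambda_1(A)$ via Lemma~\ref{lem:shift}. Your write-up is if anything slightly more careful than the paper's in tracking the inequality through the $\min$ and $\sup$, and in noting that $\ftilde_{ij}=\tfrac12(d_i+d_j)$ is what makes the resulting optimization piecewise linear.
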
  

\begin{rem}
  Notice that the RHS of~\eqref{eq:tildel} is simply the average of
  the Gershgorin bounds given by the $i$th and $j$th rows.  As such,
  these functions are again piecewise linear and can be attacked in a
  similar manner to that of the previous section, which we discuss
  below.  Also, notice in the statement of the theorem that we can
  assume wlog that $q=0$, since adding $q$ to every diagonal entry
  simply shifts every eigenvalue by exactly $q$.
\end{rem}

\begin{proof}
  If $a_{ii} = a_{jj}$, then
\begin{align*}
  f_{ij}(A,x) &= a_{ii} - x -\sqrt{R_i(A-x\om)R_j(A-x\om)} \\&\ge a_{ii} - x - \frac12(R_i(A-x\om)+R_j(A-x\om)) =: \ftilde_{ij}(A,x).
\end{align*}
Notice that 
\begin{equation*}
  \ftilde_{ij}(A,x) = \frac12 (d_i(A,x) + d_j(A,x)),
\end{equation*}
where the $d_i$ are the linear bounding functions
from~\eqref{eq:defofds}.  Thus we can define
\begin{equation*}
  \widetilde{\lambda^{\mathsf{S}}_B}(A):= \inf_{x\ge 0}\min_{i\neq j} \ftilde_{ij}(A,x),
\end{equation*}
and we are guaranteed that $\widetilde{\lambda^{\mathsf{S}}_B}(A) \le
\ls B A$, so that it is still a good bound for the eigenvalues.
\end{proof}

\begin{define}
  Let $A$ be an $n\times n$ symmetric matrix with all diagonal entries
  equal to $q$.  Define $p_k = n-1-k$ for $k=1,\dots 2n-1$.  For each
  $i\neq j$, let us define $\d_{i,j,k}$ and $q_{i,j,k}$ as follows:
  let $y_1\le y_2\le\dots\le y_{2n-2}$ be the nondiagonal entries of
  rows $i$ and $j$ of the matrix $A$ in nondecreasing order, and then
  for $k=1,\dots, 2n-1$,
\begin{equation*}
  \d_{i,j,k} = \sum_{m<k} y_m - \sum_{m\ge k}y_m,
\end{equation*}
 $q_{i,j,k} = q + \d_{i,j,k}$ and finally $q_{k} = \min_{i\neq j}
q_{i,j,k}$.
\end{define}

\begin{prop}
  If the diagonal entries of $A$ are all the same, then (q.v.~Theorem~\ref{thm:gersh})
\begin{equation*}
  \lb B {A-x\om} = \min_{k=1}^{2n-1} (p_k x + q_k).
\end{equation*}
\end{prop}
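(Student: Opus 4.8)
The plan is to recognize that the object on the left, which by the proof of Theorem~\ref{thm:halfGersh} is the inner minimand $\min_{i\neq j}\ftilde_{ij}(A,x)$ with $\ftilde_{ij}(A,x) = \tfrac12(d_i(A,x)+d_j(A,x))$, is a genuine piecewise-linear function of $x$, and then to reduce the claim to the single-row computation already carried out in Lemma~\ref{lem:dD} and the ``global bounds'' step of Theorem~\ref{thm:gersh}. The point is that averaging two Gershgorin rows does not destroy the piecewise-linear structure; it merely doubles the number of break points.

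First I would fix $i\neq j$ and rewrite $\ftilde_{ij}$. Since every diagonal entry equals $q$, we have $d_i(A,x)+d_j(A,x) = 2(q-x) - \l(R_i(A-x\om)+R_j(A-x\om)\r)$, and the two off-diagonal sums combine into a single sum
\begin{equation*}
  \ftilde_{ij}(A,x) = q - x - \tfrac12\sum_{m=1}^{2n-2}\av{y_m - x},
\end{equation*}
where $y_1\le\cdots\le y_{2n-2}$ are the $2n-2$ off-diagonal entries of rows $i$ and $j$ listed together (with the shared value $a_{ij}=a_{ji}$ appearing twice), exactly as in the definition preceding the proposition. The only structural changes from the single-row case are that the combined list has $2n-2$ entries rather than $n-1$, and that there is an overall factor $\tfrac12$ in front of the absolute-value sum.

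Next I would repeat the sorting argument of Lemma~\ref{lem:dD} verbatim. On the interval $x\in(y_{k-1},y_k)$ exactly $k-1$ of the $y_m$ lie below $x$ and $2n-1-k$ lie at or above it, so $\sum_m\av{y_m-x}$ is linear with slope $2k-2n$, whence $\ftilde_{ij}(A,x)$ is linear with slope $-1-\tfrac12(2k-2n) = n-1-k = p_k$; collecting the constant terms gives the intercept $q_{i,j,k}$. The inequalities $\av t\ge t$ and $\av t\ge -t$ then show, just as before, that each line $p_k x + q_{i,j,k}$ lies weakly above $\ftilde_{ij}$ for all $x$ while coinciding with it on the (possibly degenerate) $k$th interval, so $\ftilde_{ij}(A,x)=\min_{k=1}^{2n-1}(p_k x + q_{i,j,k})$. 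Finally, interchanging the two minima as in the global-bounds step of Theorem~\ref{thm:gersh},
\begin{equation*}
  \min_{i\neq j}\ftilde_{ij}(A,x) = \min_{i\neq j}\min_k (p_k x + q_{i,j,k}) = \min_k\l(p_k x + \min_{i\neq j} q_{i,j,k}\r) = \min_k(p_k x + q_k),
\end{equation*}
which is the claimed identity.

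I do not expect a genuine obstacle, since the argument is structurally identical to the Gershgorin case and the remaining points are bookkeeping rather than ideas. The main thing to get right is the index count on the merged list --- that $2n-2$ entries produce exactly $2n-1$ linear pieces with slopes $p_k=n-1-k$ --- together with the degenerate-interval convention (when $y_{k-1}=y_k$ the equality holds only at the point, not on an open interval, but the domination argument still forces the $\min$ representation). The overall factor $\tfrac12$ coming from the averaging must be tracked through to the intercept; once it is, the constants are precisely the $q_{i,j,k}$ of the definition, and the minimization over pairs collapses them to $q_k$.
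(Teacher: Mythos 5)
Your proposal follows the paper's own proof: the paper merely observes that the slope of $d_{ij}(A,x)=\tfrac12(d_i(A,x)+d_j(A,x))$ decreases by one each time $x$ passes an off-diagonal entry of row $i$ or row $j$, so the slopes run from $n-2$ down to $-n$, and you have filled in exactly the sorting-and-domination details of Lemma~\ref{lem:dD} together with the exchange of minima from the global-bounds step of Theorem~\ref{thm:gersh}. Your identification of the left-hand side with $\min_{i\neq j}d_{ij}(A,x)$ (rather than the genuinely nonlinear Brauer minimum $\min_{i\neq j}f_{ij}(A,x)$) is also the intended reading, and your slope computation $-1-\tfrac12(2k-2n)=n-1-k=p_k$ is correct.

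One arithmetic point should be corrected, though. Carrying the factor $\tfrac12$ through the computation on $(y_{k-1},y_k)$ gives
\begin{equation*}
  d_{ij}(A,x) \;=\; q-x-\tfrac12\Bigl[(2k-2n)x-\delta_{i,j,k}\Bigr] \;=\; p_k x + q + \tfrac12\,\delta_{i,j,k},
\end{equation*}
so the intercept is $q+\tfrac12\delta_{i,j,k}$, \emph{not} the $q_{i,j,k}=q+\delta_{i,j,k}$ of the paper's definition. (A sanity check at $k=1$, $x=0$: the value is $q-\tfrac12(R_i(A)+R_j(A))$, which matches $q+\tfrac12\delta_{i,j,1}$ but not $q+\delta_{i,j,1}$.) This is evidently a slip in the paper's definition rather than in your argument --- the slopes $p_k$ already encode the $\tfrac12$ --- but your explicit claim that ``once the factor $\tfrac12$ is tracked, the constants are precisely the $q_{i,j,k}$ of the definition'' is false as stated; you should either redefine $q_{i,j,k}=q+\tfrac12\delta_{i,j,k}$ or flag the discrepancy. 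Everything else, including the count of $2n-1$ pieces from $2n-2$ merged entries (with $a_{ij}$ listed twice) and the degenerate-interval convention, is handled correctly.
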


\begin{proof}
  The proof is similar to that of the previous section.  The main
  difference is that while the slope $d_i(A,x)$ decreases by two every
  time $x$ passes through a non-diagonal entry of row $i$, the slope
  of $d_{ij}(A,x)$ decreases by one every time $x$ passes through a
  non-diagonal entry of row $i$ or row $j$.  From this the potential
  slopes of $d_{ij}(A,x)$ can be any integer between $n-2$ and $-n$,
  and everything else follows. 
\end{proof}

\section{Applications and Comparison}

In Section~\ref{sec:adj} we use the main results of this paper to give
a bound on the lowest eigenvalue of the adjacency matrix of an
undirected graph, which is sharp for some circulant graphs.  In
Section~\ref{sec:randomnumerics} we perform some numerics to compare
the shifted Gershgorin method to other methods on a class of random
matrices with nonnegative coefficients.  Finally in
Sections~\ref{sec:pdnumerics} and~\ref{sec:spread} we consider the
efficacy of shifting as a function of many of the entries of the
matrix.

\subsection{Adjacency matrix of a graph}\label{sec:adj}

\begin{define}
Let $G=(V,E)$ be an undirected loop-free graph; $V=[n]$ is the set of
vertices of the graph and $E\subseteq V\times V$ is the set of edges.
We define the {\em adjacency matrix} of the graph to be the $n\times
n$ matrix $A(G)$ where
\begin{equation*}
  A(G)_{ij}  =\begin{cases} 1,&(i,j)\in E,\\ 0,&(i,j)\not\in E.\end{cases}
\end{equation*}
We also write $i\sim j$ if $(i,j)\in E$.  We define the {\bf degree}
of vertex $i$, denoted $\deg(i)$, to be the number of vertices
adjacent to vertex $i$.  We will denote the maximal and minimal
degrees of $G$ by $\Delta(G), \delta(G)$.  We also denote
$\Delta^2(G)$ (resp.~$\delta^2(G)$) as the second largest
(resp.~smallest) degrees of $G$.
\end{define}

We use the convention here that $A(G)_{ii} = 0$ for all $i$.  This
choice varies in the literature but making these diagonal entries to
be one instead will only shift every eigenvalue by $1$.  From
Remark~\ref{rem:Gersh}, since all off-diagonal entries are
nonnegative, we have $\rb G A = \rs G A$.  However, we can improve the
left-hand bound by scaling.  Intuitively, we expect the scaling to
help when there are many positive off-diagonal terms, so we expect
this to help best when we have a ``dense'' graph, i.e. a graph with
large $\delta(G)$.

We can use the formulas from above, but in fact we can considerably
simplify this computation.  Note that
\begin{equation*}
  d_i(A-x\om) = -x - \sum_{j\sim i} \av{1-x} - \sum_{j\not\sim i}\av{x}.
\end{equation*}
Notice that this function is decreasing for $x>1$ in any case, so we
can restrict the domain of consideration to $x\in[0,1]$.  Restricted
to this domain, the function simplifies to
\begin{align*}
  d_i(A-x\om) &= -x - \deg(i)(1-x) - (n-1-\deg(i))(x)\\& = -\deg(i) + (2\deg(i)-n)x.
\end{align*}
Noting that $-\deg(i) + (2\deg(i)-n)x = -nx + \deg(i)(2x-1)$, we see
that all of these functions are equal to $-n/2$ at $x=1/2$.  From this
it follows that the only possible optimal points are $x=0,1/2,1$ in
the three cases that all of the slopes are negative, some are of each
sign, and all slopes are positive.  (The simplest way to see this is
to note that the family of lines given by $d_i$ all ``pivot'' around
the point $(1/2,-n/2)$.)

The case where all slopes are negative is if $2\deg(i)-n<0$ for all
$i$, or $\Delta(G) < n/2$.  In this case, the unshifted Gershgorin
bound of $-\Delta(G)$ is the best that we can do.  If not all of the
slopes are of one sign, i.e. $\delta(G) \le n/2 \le \Delta(G)$, then
the best bound is $-n/2$.  Finally, if all of the slopes are positive,
i.e. $\delta(G) > n/2$, then at $x=1$ we obtain the bound
$\delta(G)-n$.

Noting that all of the diagonal entries of $A(G)$ are the same, we can
use Theorem~\ref{thm:halfGersh} and consider the $d_{ij}$.  If we
define \newcommand{\ddeg}{{\mathrm{ddeg}}} $\ddeg(i,j) =
1/2(\deg(i)+\deg(j))$ to be the average of the degrees of vertices $i$
and $j$, then
\begin{align*}
  d_{ij}(A,x) 
  &=\frac12 (d_i(A,x) + d_j(A,x)) = -\ddeg(i,j) + 2(\ddeg(i,j)-n)x.
\end{align*}
If we also define $\Delta_2(G)$ as the average of the two largest
degrees of $G$, and $\delta_2(G)$ as the average of the two smallest
degrees, then everything in the previous case analogously holds with $\Delta,\delta$ replaced by $\Delta_2,\delta_2$.

Finally, note that we can apply the Brauer method, in a similar
manner, directly.  Rewriting the above to obtain
\begin{equation*}
  R_i(A-x\om) = \deg(i) + (n-1-2\deg(i))x,
\end{equation*}
we have
\begin{equation*}
  f_{ij}(A,x) = -x-\sqrt{(\deg(i) + (n-1-2\deg(i))x)(\deg(j) + (n-1-2\deg(j))x)}
\end{equation*}
Again, we see that all of these functions are equal at $x=1/2$ and
again take value $-n/2$.  At $x=0$, this gives the unshifted Brauer
bound of $-\sqrt{\deg(i)\deg(j)}$ which is of course minimized at
$-\sqrt{\Delta(G)\Delta^2(G)}$.  At $x=1$, we obtain the bound
\begin{equation*}
  -1-\sqrt{((n-1)-\deg(i))((n-1)-\deg(j))}
\end{equation*}
which is minimized at $-1-\sqrt{(n-1-\delta(G))(n-1-\delta^2(G))}$.
As before, the first bound $-\sqrt{\Delta(G)\Delta^2(G)}$ is best for
graphs with small largest degree, the second bound
$-1-\sqrt{(n-1-\delta(G))(n-1-\delta^2(G))}$ is best for graphs with
large smallest degree, and the $-n/2$ bound is best for graphs with a
large separation of degrees.

One can check directly that these bounds are exact for the $(n,1)$-
and $(n,n/2-1)$-circulant graphs.  Of course when the graph is
regular, the Gershgorin and Brauer bounds coincide.  In
Figure~\ref{fig:adj} we plot numerical computations for a sample of
Erd\H{o}s--R\'enyi random graphs.  We consider the case with $N=20$
vertices and an edge probability of $p=0.9$ (we are considering the
case where the minimal degree is large, so the shifted bound is best).
We see that for a few graphs, this estimate is almost exact, and seems
to be not too bad for a large selection of graphs.  Note that the
unshifted bounds given by either Gershgorin or Brauer are much worse
in this case: the average degree of the vertices in these graphs is
$18$, and the average maximal degree is larger still
(see~\cite{Riordan.Selby.00} for more precise statements), so that the
unshifted methods would give bounds near $-18$, which is much further
from the actual values.

\begin{figure}[ht]
\begin{centering}
  \includegraphics[width=.9\textwidth]{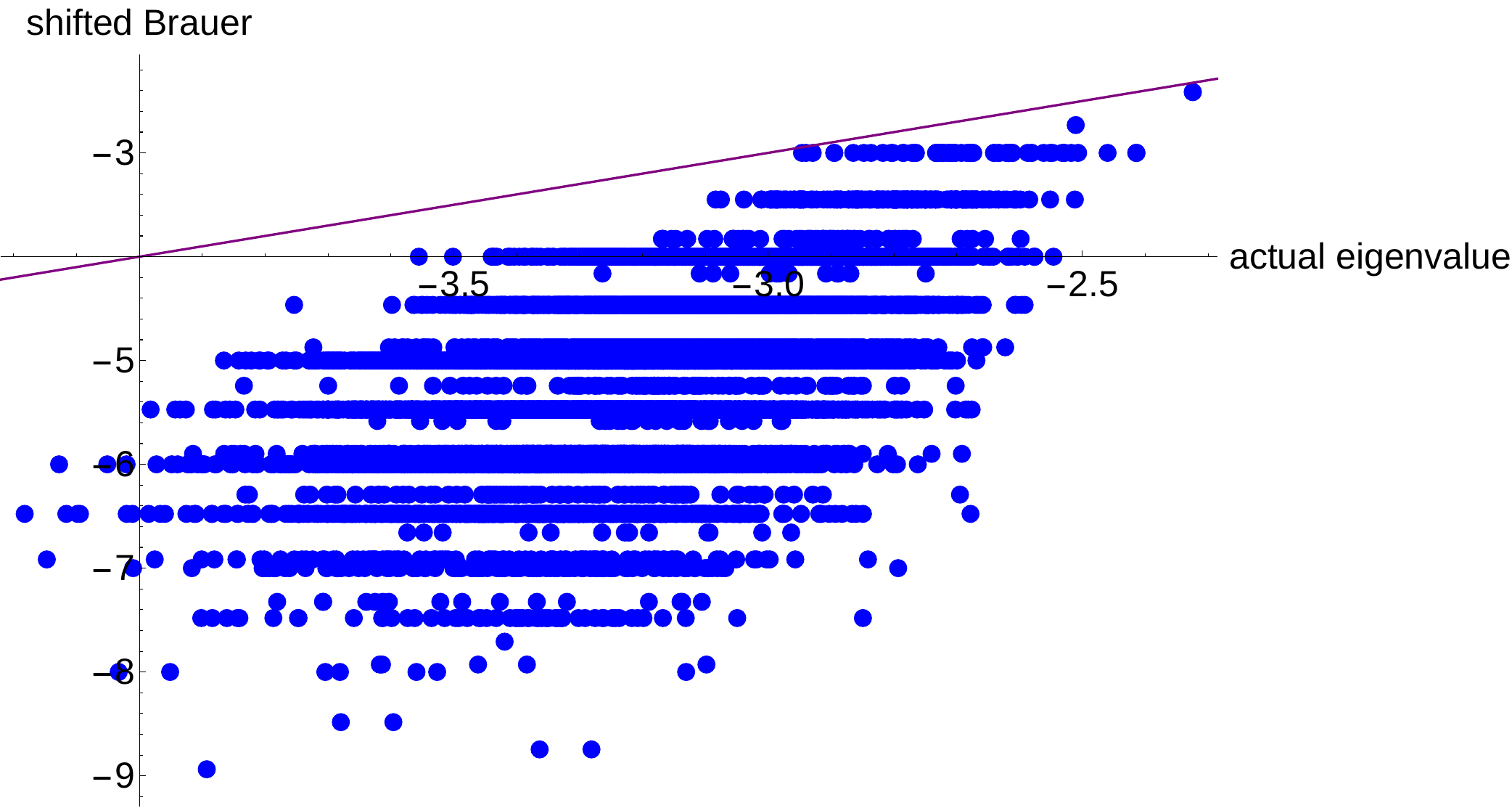}
  \caption{Plot of 10,000 samples of Erd\H{o}s--R\'enyi random graphs
    $G(N,p)$ with $N=20$ vertices and edge probability $p=0.9$.  For
    each random graph, we choose $A$ to be the adjacency matrix of the
    graph, and we are plotting $\lambda_1(A)$ on the $x$-axis and $\ls
    B A$ on the $y$-axis. }
  \label{fig:adj}
\end{centering}
\end{figure}

%
%
%
%
%
%

\subsection{Numerical comparisons of unshifted and shifted
  methods}\label{sec:randomnumerics}

Here we present the results of a numerical study comparing methods
studied here to the existing methods, where we compare the left bound
given by multiple methods for matrices with nonnegative coefficients.

We present the results of the numerical experiments in
Figures~\ref{fig:4} and~\ref{fig:sc}.  In each case, we considered
$1000$ $5\times 5$ matrices with random integer coefficients in the
range $0,\dots,10$. For each random matrix $A$, we compute the
unshifted Gershgorin bound $\lb G A$, the unshifted Brauer bound $\lb
B A$, the shifted Gershgorin bound $\ls G A$, and the actual smallest
eigenvalue.  In terms of computation, we computed all of these
directly except for $\ls G A$, and here we
used~\eqref{eq:singlefunction}.

In the left frame of each figure, we plot all four numbers for each
matrix.  Since these are all random samples, ordering is irrelevant,
and we have chosen to order them by the value of $\lb B A$ (yellow).
We have plotted $\lb G A$ in blue, and we confirm in this picture that
$\lb G A \le \lb B A$.  Finally, we plot $\ls GA$ in green and the
actual smallest eigenvalue $\lambda_1(A)$ in red.  We see that $\ls G
A > \lb B A$ for most samples, and is actually quite a bit better in
many cases.  We further compare $\lb B A$ and $\ls G A$ in
Figure~\ref{fig:sc}, where we have a scatterplot of the error given by
the two bounds: on the $x$-axis we plot $\lambda_1(A) - \ls G A$ (the
error given by shifted Gershgorin) and on the $y$-axis we plot
$\lambda_1(A) - \lb B A$ (the error given by unshifted Brauer).  We also
plot the line $y=x$ for comparison.  We again see that shifted
Gershgorin frequently beats unshifted Brauer, and typically by quite a
lot.  In fact, in 1000 samples we found that they were exactly the
same 4 times, and unshifted Brauer was better 21 times, so that shifted
Gershgorin gives a stronger estimate $97.5\%$ of the time.

We plot the same data for $10\times10$ matrices (again with integer
coefficients from $0,\dots,10$) in Figures~\ref{fig:410}
and~\ref{fig:sc10}.  We see that for the larger matrices, the
separation between the estimates and the actual eigenvalue grows, but
$\ls G A$ does much better than $\lb B A$.  In particular, out of 1000
samples, $\ls G A$ beats $\lb B A$ every time.

\begin{figure}[ht]
\begin{centering}
  \includegraphics[width=.8\textwidth]{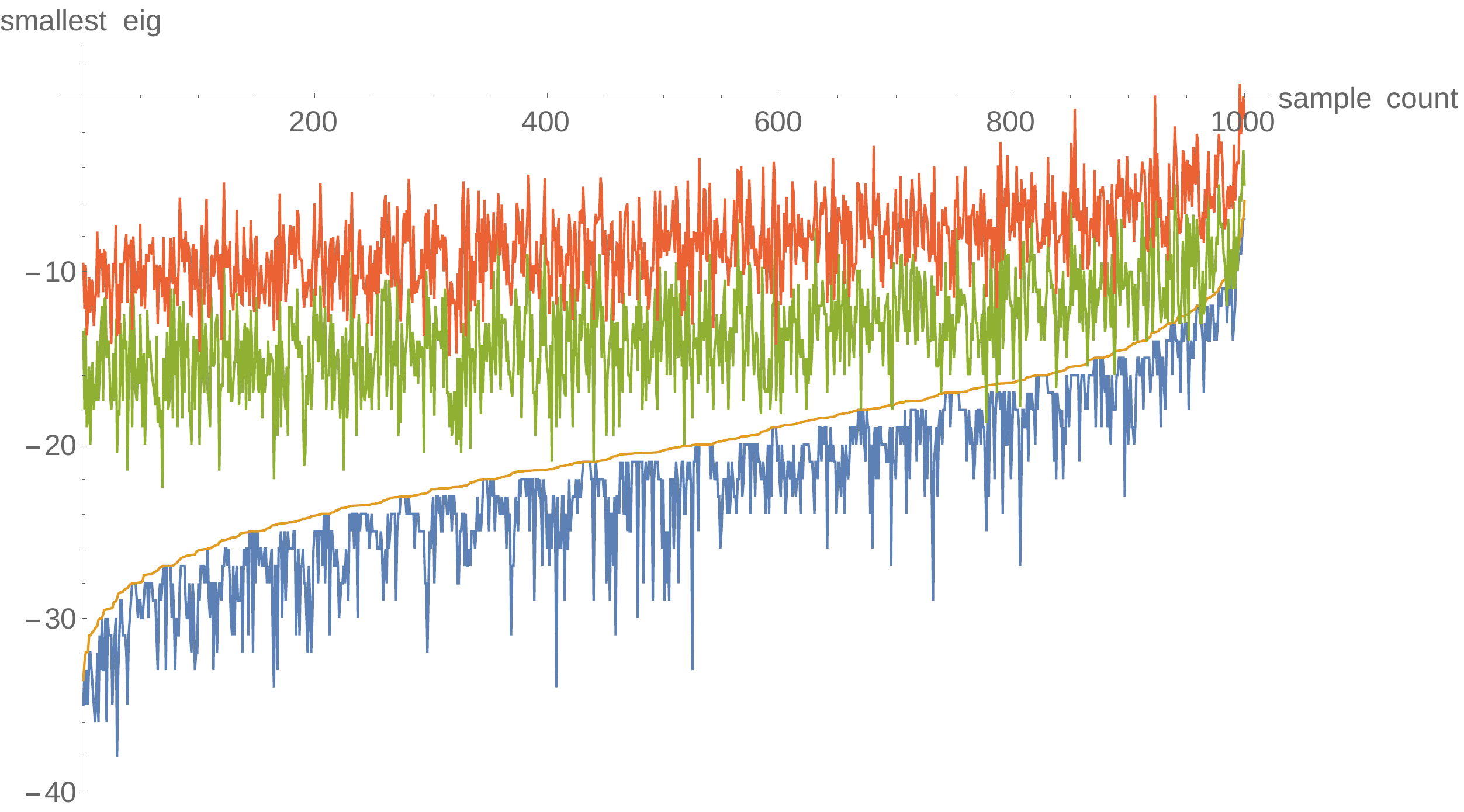}
  \caption{A calculation of four quantities for each of $1000$ random
    $5\times 5$ matrices.  We plot $\lb G A$ in blue, $\lb B A$ in
    yellow, $\ls G A$ in green, and $\lambda_1(A)$ in red.  These have
    been sorted by the value of $\lb B A$ for easier visualization.}
  \label{fig:4}
\end{centering}
\end{figure}

\begin{figure}[ht]
\begin{centering}
  \includegraphics[width=.8\textwidth]{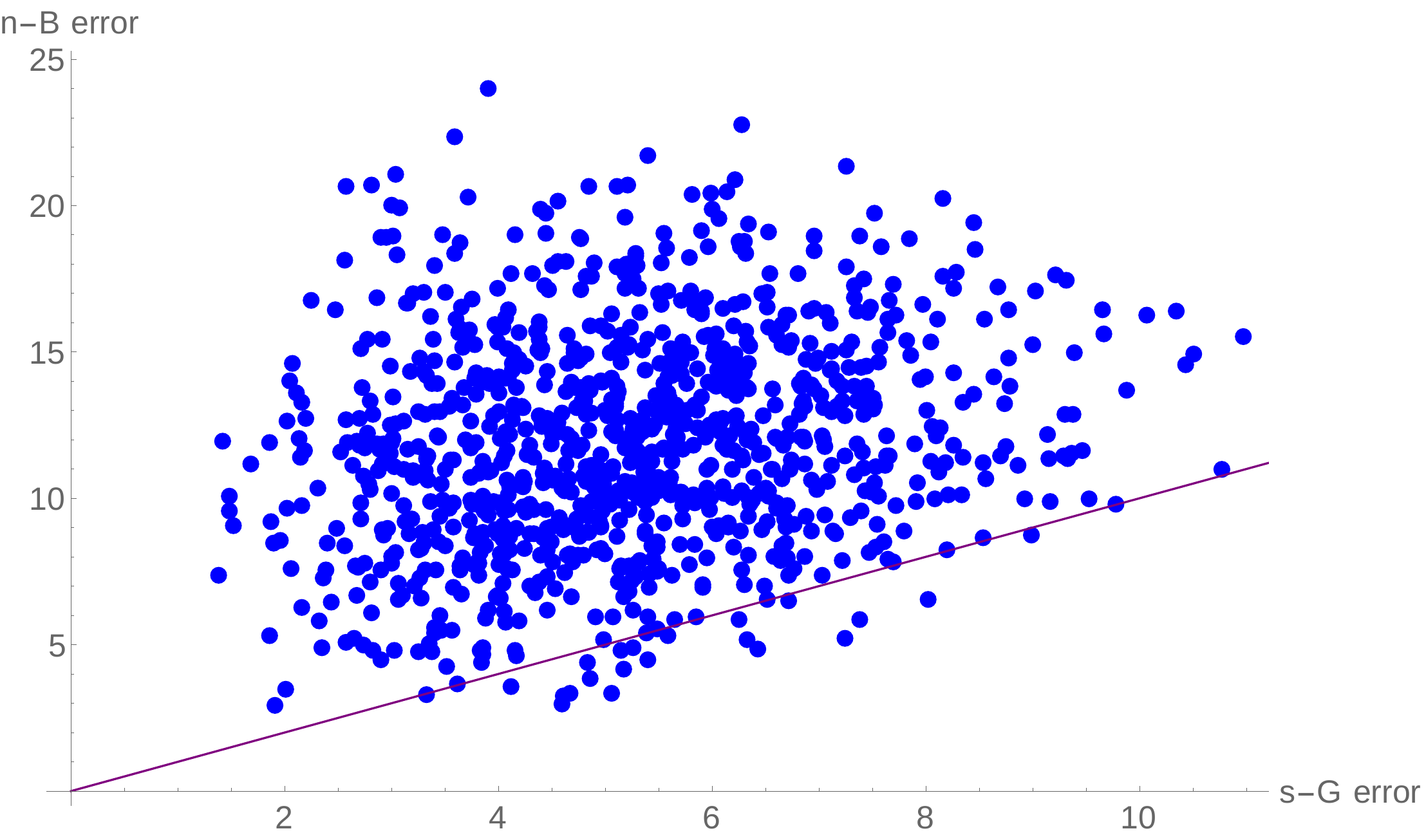}
  \caption{The same data as Figure~\ref{fig:4} where we plot
    $\lambda_1(A) - \ls G A$ on the $x$-axis and $\lambda_1(A)-\lb B
    A$ on the $y$-axis.}
  \label{fig:sc}
\end{centering}
\end{figure}

\begin{figure}[ht]
\begin{centering}
  \includegraphics[width=.8\textwidth]{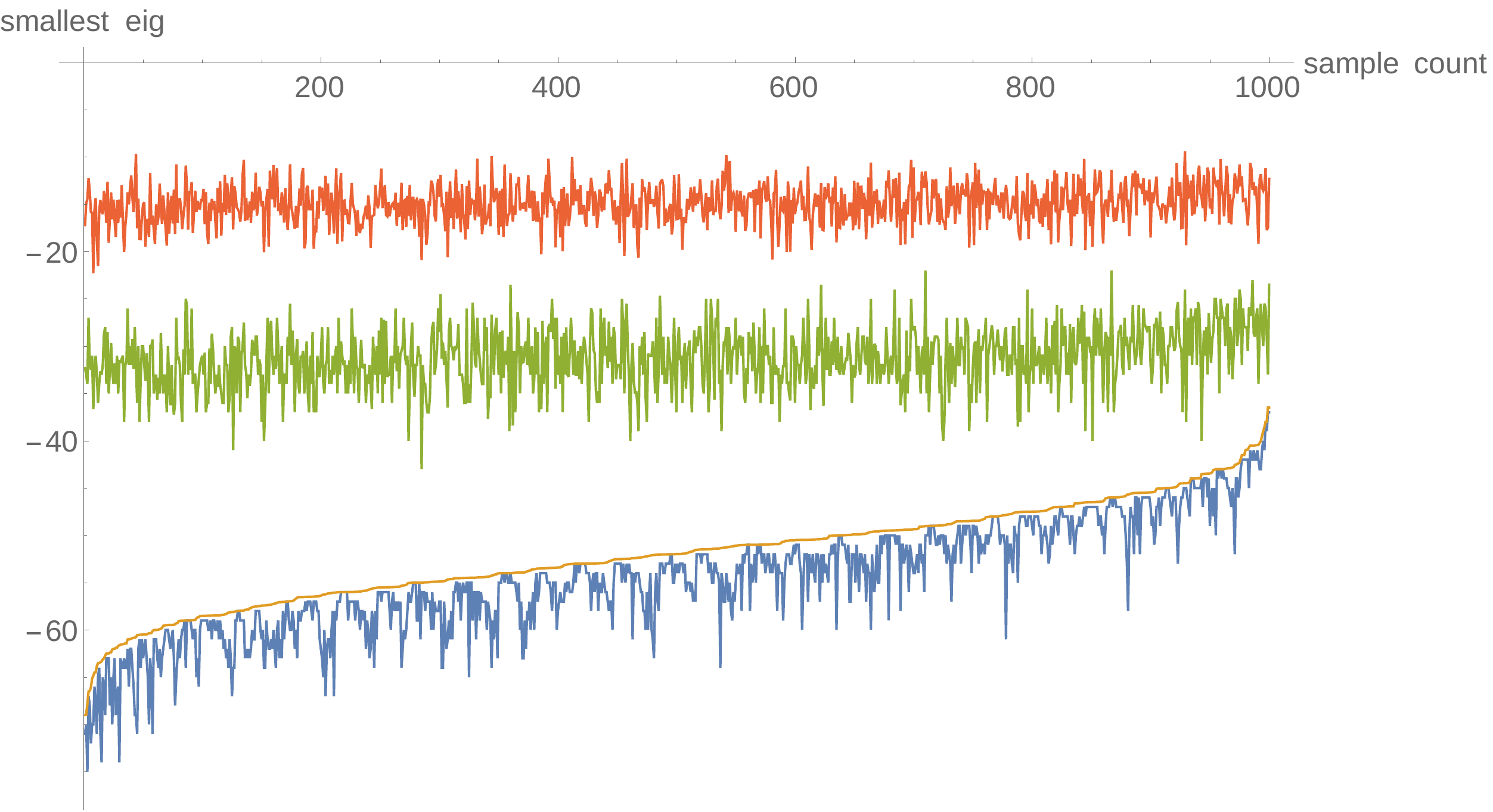}
  \caption{A calculation of four quantities for each of $1000$ random
    $10\times 10$ matrices.  We plot $\lb G A$ in blue, $\lb B A$ in
    yellow, $\ls G A$ in green, and $\lambda_1(A)$ in red.  These have
    been sorted by the value of $\lb B A$ for easier visualization.}
  \label{fig:410}
\end{centering}
\end{figure}

\begin{figure}[ht]
\begin{centering}
  \includegraphics[width=.8\textwidth]{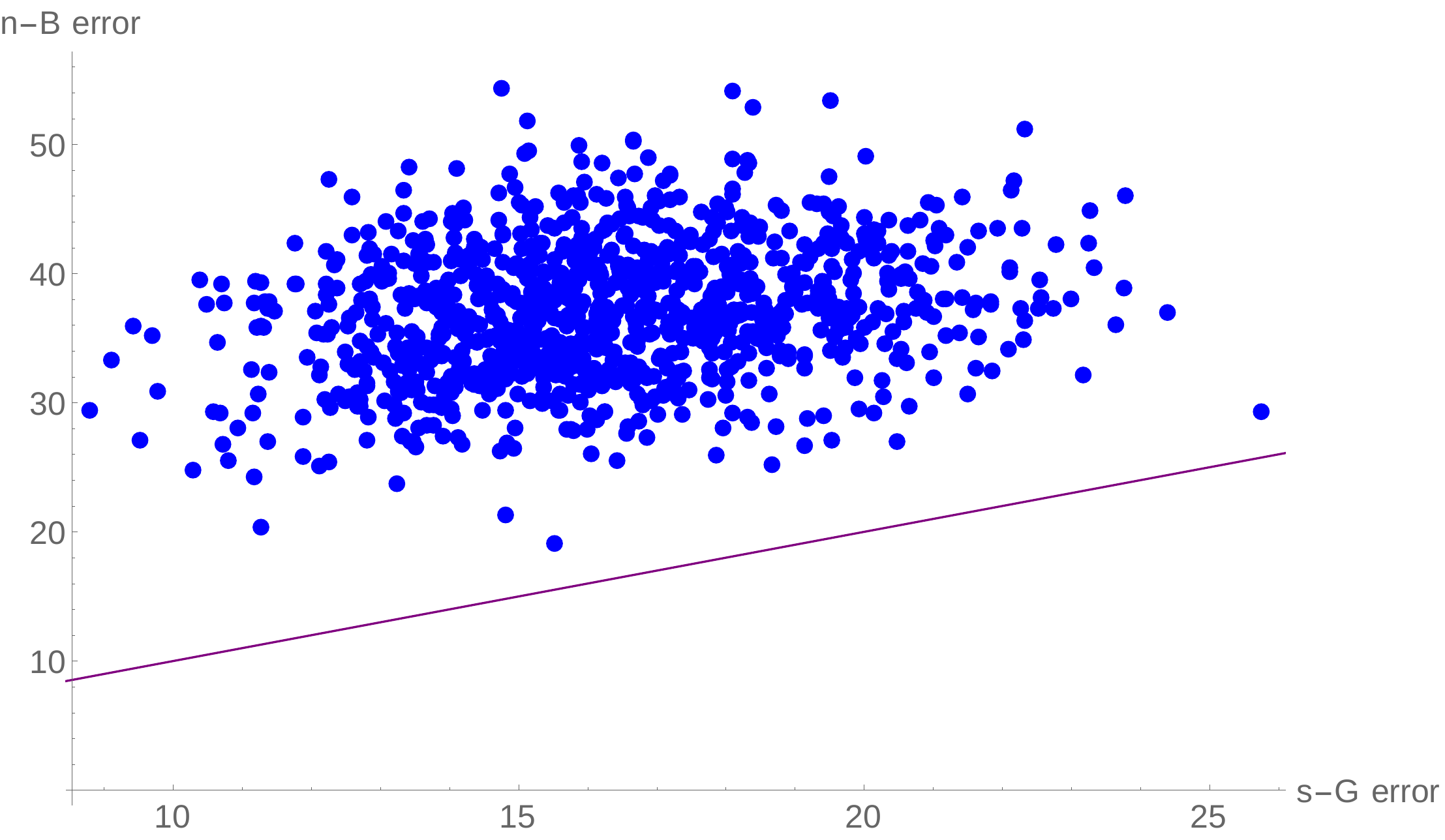}
  \caption{The same data as Figure~\ref{fig:410} where we plot
    $\lambda_1(A) - \ls G A$ on the $x$-axis and $\lambda_1(A)-\lb B
    A$ on the $y$-axis.}
  \label{fig:sc10}
\end{centering}
\end{figure}

\subsection{ Domain of positive definiteness}\label{sec:pdnumerics}

One other way to view the problem of positive definiteness is as
follows.  Let us imagine that a matrix is given where the off-diagonal
entries are prescribed, but the diagonal entries are free.  We could
then ask what condition on the diagonal entries guarantees that the
matrix be positive definite?

First note that as we send the diagonal entries to infinity, the
matrix is guaranteed positive definite, simply because of the
Gershgorin result: as long as $a_{ii} > \sum_{j\neq i}\av{a_{ij}}$ for
all $i$, then the matrix is positive definite.  Thus we obtain at
worst an unbounded box from unshifted Gershgorin.

From the fact that the shifted Gershgorin estimates are always
piecewise linear, we would expect that the region from this is always
an ``infinite polytope'', i.e. an unbounded intersection of finitely
many half-planes that is a superset of the unshifted Gershgorin box.  We
prove this here.

\begin{prop}\label{cor:polytope}
  Let $v$ be a vector of length $(n^2-n)/2$, and let $\mathcal{M}(v)$
  be the set of all matrices with $v$ as the off-diagonal terms
  (ordered in the obvious manner).  Let us define $\mathcal{S}(v)$ as
  the subset of $\mathcal{M}(v)$ where $A\in\mathcal{S}(v)$ if $\ls G
  A \ge 0$, and let $\mathcal{D}(v)$ to be the set of all diagonals of
  matrices in $\mathcal{S}(v)$.  Then the set $\mathcal{D}(v)$ is an
  unbounded domain defined by the intersection of finitely many
  half-planes that contains a box of the form $a_{ii} \ge \beta_i$,
  and as such can be written $\cap_\ell \{x:a_\ell\cdot x\ge 0\}$
  where the entries of $a_{\ell}$ are nonnegative.
\end{prop}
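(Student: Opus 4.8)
The plan is to recognize the set $\mathcal D(v)$ as the projection of an explicit polyhedron and then read off its three asserted properties (polyhedrality, the box, and nonnegative normals) from general structural facts. Fix the off-diagonal data $v$. By Definition~\ref{def:rsk} the constants $\delta_{i,k}$ depend only on $v$, so $s_{i,k}=a_{ii}+\delta_{i,k}$ is affine in the diagonal vector $d=(a_{11},\dots,a_{nn})$, and by Theorem~\ref{thm:gersh} we have $\ls G A=\sup_{x\ge0}\min_k(r_kx+s_k)$ with $s_k=\min_i s_{i,k}$. For fixed $d$ the map $x\mapsto\min_k(r_kx+s_k)$ is concave (a minimum of affine functions) and tends to $-\infty$ as $x\to\infty$ (the index $k=n$ has slope $r_n=-n<0$), so its supremum over $[0,\infty)$ is attained. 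Hence, using $s_k=\min_i(a_{ii}+\delta_{i,k})$,
\begin{equation*}
  \ls G A\ge 0 \ \Longleftrightarrow\ \exists\,x\ge 0:\quad a_{ii}+\delta_{i,k}+r_k x\ge 0\ \ \text{for all }i,k.
\end{equation*}
Thus, writing $P=\{(d,x)\in\R^{n}\times\R:\ x\ge0,\ a_{ii}+\delta_{i,k}+r_kx\ge0\ \ \forall i,k\}$, we have exactly $\mathcal D(v)=\{d:\ (d,x)\in P\ \text{for some }x\}$, i.e. $\mathcal D(v)$ is the image of the polyhedron $P$ under the coordinate projection forgetting $x$.

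First I would invoke the standard fact (Fourier--Motzkin elimination, or Minkowski--Weyl) that the projection of a polyhedron is again a polyhedron; this immediately yields that $\mathcal D(v)$ is cut out by finitely many half-planes and is closed. Rather than track the elimination by hand to control the box and the signs of the normals, I would argue monotonicity directly. Each $s_{i,k}=a_{ii}+\delta_{i,k}$ is nondecreasing in every coordinate of $d$, hence so is $s_k=\min_i s_{i,k}$, hence so is $\min_k(r_kx+s_k)$ for each fixed $x\ge0$, and therefore so is its supremum $\ls G A$. So $d\mapsto\ls G A$ is coordinatewise nondecreasing, and $\mathcal D(v)$ is \emph{upward closed}: if $d\in\mathcal D(v)$ and $d'\ge d$ entrywise, then $\ls G{A'}\ge\ls G A\ge0$, so $d'\in\mathcal D(v)$.

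For the box I would take the slice $x=0$ of $P$, where the constraints read $a_{ii}\ge-\delta_{i,k}$ for all $k$, i.e. $a_{ii}\ge-\min_k\delta_{i,k}=\sum_{j\neq i}|a_{ij}|=:\beta_i$ (the identity $-\min_k\delta_{i,k}=\sum_j|y_j|$ is the same computation that identifies $\min_k s_{i,k}$ with the unshifted Gershgorin bound $d_i(A,0)$). Since $\beta_i$ depends only on $v$, this gives $\{d:\ a_{ii}\ge\beta_i\ \forall i\}\subseteq\mathcal D(v)$, which is the asserted box and shows $\mathcal D(v)$ is unbounded. To get nonnegative normals I would combine upward-closedness with polyhedrality: fix an irredundant representation $\mathcal D(v)=\bigcap_\ell\{d:\ a_\ell\cdot d\ge b_\ell\}$; for any $d\in\mathcal D(v)$, any index $i$, and any $t\ge0$ we have $d+te_i\in\mathcal D(v)$, so $a_\ell\cdot d+t\,(a_\ell)_i\ge b_\ell$ for all $t\ge0$, which forces $(a_\ell)_i\ge0$. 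Hence every normal $a_\ell$ has nonnegative entries, matching the stated form (the defining inequalities being $a_\ell\cdot d\ge b_\ell$ with $a_\ell\ge0$, the box constraints $a_{ii}\ge\beta_i$ furnishing representative facets).

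The hard part is really just the setup: the clean equivalence for $\ls G A\ge0$ hinges on the supremum over $x\ge0$ being attained (supplied by the concavity-plus-coercivity observation) and on correctly identifying $\mathcal D(v)$ as the projection of $P$. After that there is no analytic difficulty — polyhedrality of projections, the one-line monotonicity of $\ls G A$ in the diagonal, and the standard argument that an upward-closed polyhedron has nonnegative facet normals are all routine — so the only care needed is in reconciling the explicit facet inequalities with the homogeneous-looking ``$a_\ell\cdot x\ge0$'' form of the statement.
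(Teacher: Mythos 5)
Your proposal is correct, and it proves everything the statement asserts. It shares the paper's central reduction --- $\ls G A \ge 0$ holds iff the finite system of affine inequalities $r_k x + s_{i,k} \ge 0$, $x\ge 0$ is feasible, so $\mathcal{D}(v)$ is the projection of a polyhedron in $(d,x)$-space onto the diagonal coordinates --- but the execution differs in three ways worth noting. First, you justify the ``iff'' by observing that $x\mapsto\min_k(r_kx+s_k)$ is concave and tends to $-\infty$ (slope $r_n=-n$), so the supremum is attained; the paper uses the same equivalence without comment, and your attainment argument is what actually licenses it. Second, where the paper performs the elimination of $x$ by hand (splitting the constraints by the sign of $r_k$ and writing out $\max_{k<n/2}(-s_k/r_k)\le\min_{k>n/2}(-s_k/r_k)$, etc.), you invoke Fourier--Motzkin as a black box; this loses the explicit inequalities but costs nothing for the claims being proved, and avoids the paper's slightly delicate bookkeeping around the $s_{n/2}\ge0$ case for even $n$. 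Third, your derivation of the nonnegativity of the normals from upward-closedness (monotonicity of $\ls G A$ in each $a_{ii}$, plus the ray argument $d+te_i\in\mathcal{D}(v)$ for $t\ge0$) is a genuine addition: the paper leaves this property implicit in the signs appearing in its explicit half-plane conditions and never argues it directly. Your identification of the box via the $x=0$ slice, $-\min_k\delta_{i,k}=\sum_j|y_j|=R_i(A)$, is the same fact the paper expresses as $\ls G A\ge\lb G A$ (the paper has a typo reversing that inequality). Both proofs leave the literal homogeneous form $\cap_\ell\{x:a_\ell\cdot x\ge0\}$ of the statement slightly unreconciled with the inhomogeneous facet inequalities; you at least flag this explicitly.
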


\begin{proof}
  We have from Theorem~\ref{thm:gersh} that
\begin{equation*}
  \ls G A = \sup_{x\ge 0} \min_{k=1}^n (r_k x + s_k).
\end{equation*}
This means that $\ls G A\ge 0$ iff there exists $x \ge 0$ such that
$r_k x + s_k\ge 0$ for all $k = 1,\dots, n$.  This is the same as saying that
\begin{equation*}
  \bigcap _{k=1}^n \{x\ge0 : r_k x +s_k \ge 0\} \neq \emptyset.
\end{equation*}
Under the condition that $s_{n/2}\ge 0$ (or under no additional condition if $n$ is odd), this is the same as 
\begin{equation*}
   \bigcap_{k < n/2}\{x\ge0: x\ge -s_k/r_k\}\cap \bigcap_{k > n/2}\{x\ge0: x\le -s_k/r_k\}\neq \emptyset.
\end{equation*}
This reduces to the two conditions
\begin{equation*}
  \max_{k<n/2} -s_k/r_k \le \min_{k>n/2} - s_k/r_k,\quad \min_{k>n/2} - s_k/r_k \ge 0,
\end{equation*}
which is clearly the intersection of half-planes in $s_k$.  Fixing the
off-diagonal elements of $A$ fixes $\delta_{i,k}$, and then $s_k$ is
the minimum of functions linear in the diagonal elements of $A$.
Finally we know that $\ls G A \le \lb G A$, so that any matrix
satisfying $a_{ii} \ge R_i(A)$ is clearly in $\mathcal{S}(v)$.
\end{proof}

It follows from this proposition that shifted Gershgorin cannot be
optimal.  The boundary of the set $\mathcal{D}(v)$ must be defined by
an $n$th degree polynomial in the $a_{ii}$, since this boundary is
given by an eigenvalue passing through zero and thus must be a subset
of $\det(A) = 0$.

\begin{exa}\label{exa:region}
  Let us consider a $3\times 3$ example for this region, and to make
  regions easier to plot let us only let two of the diagonal entries
  vary, namely
\begin{equation}\label{eq:defofAregion}
  A=\left(\begin{array}{ccc} y&a&b \\ a&z&c \\b&c&d\end{array}\right),
\end{equation}
where $a,b,c,d$ are fixed parameters and $y,z$ are variable.  From
Proposition~\ref{cor:polytope}, the domain $\{(y,z):\ls G A \ge 0\}$
is an intersection of half-planes; the domain $\{(y,z):\lb G A \ge
0\}$ is given by a box, and of course the domain $\{(y,z): A \ge 0\}$
is given by a quadratic equation in $y,z$.

Let us consider one example: choose $a=2,b=1,c=2,d=4$, which is shown
by the second frame in Figure~\ref{fig:regions}.  We can compute the
conditions on $y,z$ as follows.  We have 
\begin{equation*}
  s_1 = \min(1,y-3,z-4),\quad s_2 = \min(3,y-1,z),\quad s_3 = \min(7,y+3,z+4),
\end{equation*}
and the domain is defined by $s_2\ge 0,s_3\ge 0,s_2\ge -s_1, s_3 \ge -
3s_1$.  Writing all of this out, we obtain the conditions
\begin{equation*}
  y \ge 2, \quad z\ge 2,\quad y+z\ge 5.
\end{equation*}
Note that unshifted Gershgorin would give the conditions $y\ge 3, z\ge
4$.  The exact boundary condition from the determinant is $4yz-4y-z-8
= 0$. We plot all of these sets in Figure~\ref{fig:regions} for this
and other choices of parameters.  Note that the shifted region
sometimes shares boundary with the exact region, but being piecewise
linear this is the best that can be done.
\end{exa}

\begin{figure}[ht]
\begin{centering}
  \includegraphics[width=.3\textwidth]{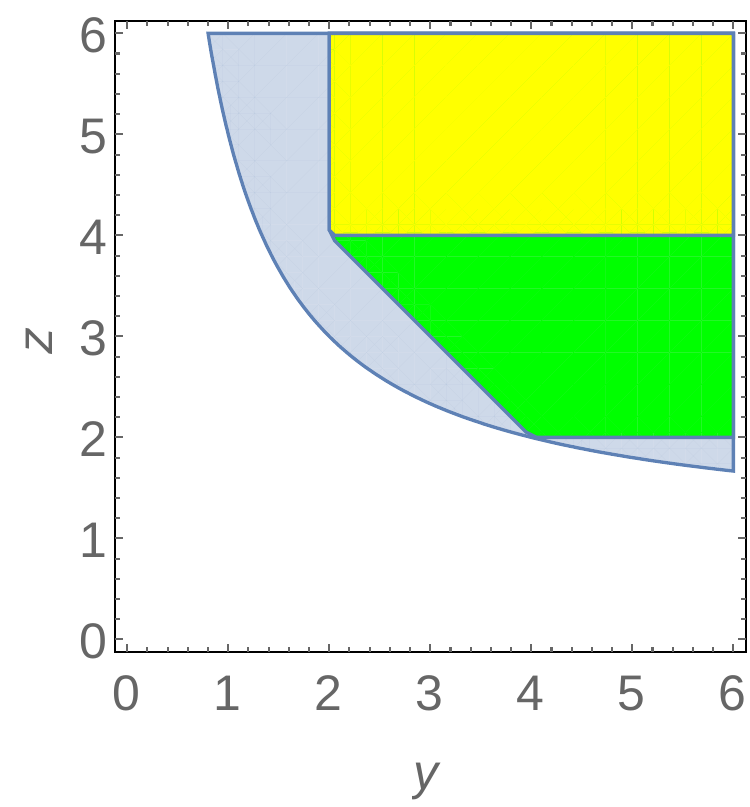}
  \includegraphics[width=.3\textwidth]{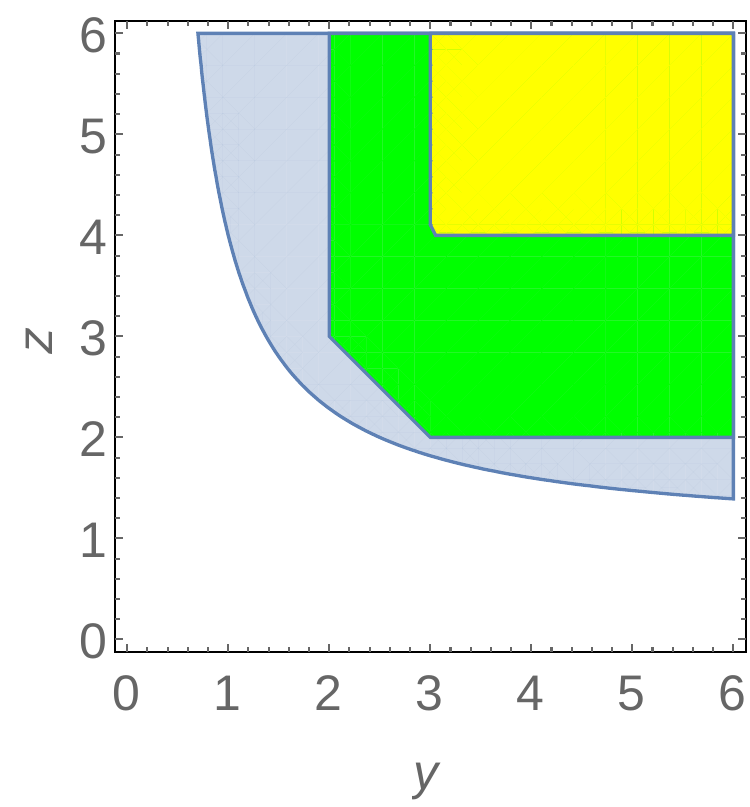}
  \includegraphics[width=.3\textwidth]{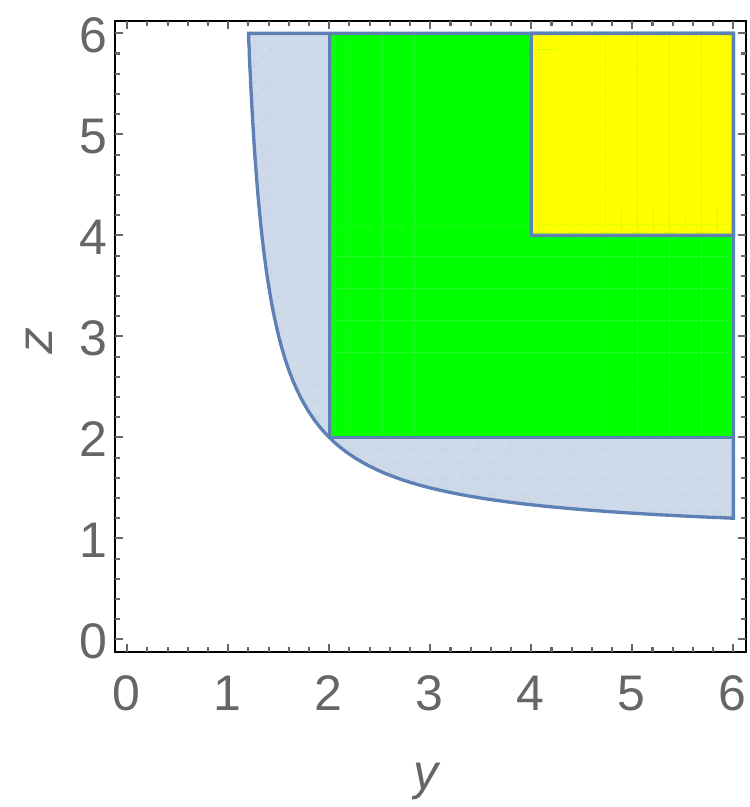}
  \caption{ Positive-definiteness for the matrix $A$ defined
    in~\eqref{eq:defofAregion}.  In each case, the actual region of
    positive-definiteness is plotted in gray, the region guaranteed by
    shifted Gershgorin given in green, and the region guaranteed by
    unshifted Gershgorin in yellow.  The three pictures correspond to
    the parameter choices $(2,0,2,4),(2,1,2,4),(2,2,2,4)$
    respectively.}
  \label{fig:regions}
\end{centering}
\end{figure}

\subsection{Spread of the off-diagonal terms}\label{sec:spread}

We might ask the question, given the $R_i(A)$, what are the best and
worst choices of off-diagonal entries for the bound $\lb G A$.
Intuitively, it seems clear that concentrating all of the mass in a
single off-diagonal element is ``worst'', whereas spreading it as much
as possible is ``best''.  In short, high off-diagonal variance is bad
for the shifted estimates.

\begin{prop}\label{prop:bestRi}
  Let $v(N-1,\rho)$ be the set of all vectors of length $N-1$ with
  nonnegative entries that sum to $\rho$.  We can then consider the
  quantities
  \begin{align*}
    \underline{d}(N-1,\rho) &= \inf_{y\in v(N-1,\rho)} \sup_{x\ge 0} \l(\rho-x-\sum_{k=1}^{n-1}\av{y_k-x}\r),\\
    \overline{d}(N-1,\rho) &= \sup_{y\in v(N-1,\rho)} \sup_{x\ge 0} \l(\rho-x-\sum_{k=1}^{n-1}\av{y_k-x}\r).
  \end{align*}
  Then $\underline{d}(N-1,\rho) = 0$ and is attained at any $v$ with
  more than half of the entries equal to zero, and
  $\overline{d}(N-1,\rho) = \rho(N-2)/(N-1)$ and is attained at the
  vector whose entries are equal to $\rho/(N-1)$.
\end{prop}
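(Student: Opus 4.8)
The plan is to treat the inner expression as a single function of $x$ for each fixed $y$, namely $g(y,x) = \rho - x - \sum_{k=1}^{N-1}\av{y_k-x}$, and to exploit the fact that this is precisely the shifted Gershgorin lower bound $d_i(A,x)$ for a single row with diagonal entry $\rho$ and off-diagonal entries $y$. The observation that anchors everything is that at $x=0$ we have $g(y,0)=\rho-\sum_k\av{y_k}=\rho-\sum_k y_k=0$ for \emph{every} admissible $y$, since the entries are nonnegative and sum to $\rho$. Hence $\sup_{x\ge0}g(y,x)\ge0$ for all $y$, so both $\underline d$ and $\overline d$ are nonnegative, and the whole problem reduces to understanding how far the supremum can be pushed above this common baseline as $x$ increases.

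For the lower bound I would invoke the concavity of $g(y,\cdot)$ established in Theorem~\ref{thm:gersh} (it is a minimum of linear functions, hence concave down). A concave function whose right derivative at its left endpoint is nonpositive is nonincreasing, so its supremum over $x\ge0$ is attained at $x=0$ and equals $0$. It therefore suffices to compute the slope of $g(y,\cdot)$ at $x=0^+$. If $m$ of the $y_k$ vanish, then for small $x>0$ those $m$ terms contribute $+x$ each while the remaining $N-1-m$ positive terms contribute $y_k-x$, and since the positive entries already sum to $\rho$ one finds $g(y,x)=(N-2-2m)x$ on this first piece. When more than half of the entries are zero we have $2m>N-1$, hence $N-2-2m<0$; the slope at $0^+$ is negative, so the supremum is $0$ and it is attained. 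Combined with the baseline $\sup_{x\ge0}g\ge0$, this gives $\underline d(N-1,\rho)=0$.

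For the upper bound the cleanest route is to interchange the two suprema, writing $\overline d=\sup_{x\ge0}\sup_{y\in v(N-1,\rho)}g(y,x)$, which is legitimate since both iterated suprema equal the joint supremum over the pair $(x,y)$. For fixed $x$ the inner problem is to \emph{minimize} $\sum_k\av{y_k-x}$ over the simplex $\{y\ge0,\ \sum_k y_k=\rho\}$. The triangle inequality gives $\sum_k\av{y_k-x}\ge\av{\sum_k(y_k-x)}=\av{\rho-(N-1)x}$, with equality precisely when all the quantities $y_k-x$ share a single sign; such a $y$ is feasible for every $x$ (take all $y_k\ge x$ when $x\le\rho/(N-1)$ and all $y_k\le x$ otherwise). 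Substituting the equality value yields the explicit profile $\sup_y g(y,x)=(N-2)x$ for $0\le x\le\rho/(N-1)$ and $\sup_y g(y,x)=2\rho-Nx$ for $x\ge\rho/(N-1)$. This concave tent peaks at $x=\rho/(N-1)$ with value $\rho(N-2)/(N-1)$, giving the claimed $\overline d$. Finally, the uniform vector $y_k=\rho/(N-1)$ satisfies the sign-alignment condition simultaneously for every $x$, so it is an optimizer of the inner problem for all $x$ and in particular realizes the overall supremum, identifying it as the maximizer.

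The step I expect to be the main obstacle is the interchange-and-triangle-inequality argument for the upper bound: one must verify that the required sign alignment is genuinely achievable within the simplex for every relevant $x$, and that a single vector (the uniform one) achieves it uniformly in $x$, so that swapping the suprema does not lose the maximizer. By contrast, the lower bound is essentially a one-line slope computation once concavity is in hand.
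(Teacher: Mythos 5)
Your proof is correct, and it is worth separating the two halves. For $\underline{d}$ you take essentially the paper's route, just more explicitly: the paper notes the baseline $g(y,0)=\rho-\sum_k y_k=0$ (so the infimum cannot be negative) and then asserts that the function is decreasing when more than half the entries vanish; your computation of the initial slope $(N-2-2m)x$ combined with concavity is the honest version of that assertion, and the sign check ($2m>N-1$ forces $N-2-2m<0$) is exactly what is needed. For $\overline{d}$ your route is genuinely different from the paper's. The paper keeps $y$ fixed, uses piecewise linearity to reduce $\sup_{x\ge 0}g(y,x)$ to the maximum of the values at the breakpoints $x=y_k$, and then proves the per-index inequality $y_k+\sum_{\ell\neq k}\av{y_k-y_\ell}\ge \rho/(N-1)$ by exhibiting one entry above the mean. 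You instead interchange the two suprema (harmless, since both iterated suprema equal the joint supremum over $(x,y)$) and, for fixed $x$, minimize $\sum_k\av{y_k-x}$ over the simplex via the triangle inequality, obtaining the explicit value function $\rho-x-\av{\rho-(N-1)x}$, a tent peaking at $x=\rho/(N-1)$. Your version buys a closed form for $\sup_y g(y,x)$ and makes the optimality of the uniform vector transparent, since it attains the sign-alignment equality condition for every $x$ simultaneously and hence survives the exchange of suprema as an actual maximizer; the paper's version stays entirely within the breakpoint machinery of Theorem~\ref{thm:gersh} and never needs to exchange extrema. Both arguments are complete.
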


\begin{proof}
  The claim about $\underline{d}(N-1,\rho)$ is straightforward.  If
  more than half of the entries are zero, then it is easy to see that
  the function is decreasing in $x$ and its supremum is attained at
  $x=0$, giving zero.  Moreover, this infimum cannot be less than
  zero.

  For $\overline{d}(N-1,\rho)$, we first show that for any $y$ with $\sum_k y_k = \rho/(N-1)$,
\begin{equation*}
  \min_k y_k + \sum_{\ell\neq k} \av{y_k-y_\ell} \ge \frac\rho{N-1}.
\end{equation*}
First note that if all $y_k = \rho/(N-1)$ then equality is satisfied.
Assume that $y$ is not a constant vector.  If $y_k \ge \rho/(N-1)$
then the inequality is trivial, so assume $y_k < \rho/(N-1)$.  Since
the average of the entries of $y$ is $\rho/(N-1)$, at least one
$y_{\ell^*} > \rho/(N-1)$.  Writing $\alpha = \rho/(N-1)-y_k$, we have
$\av{y_k-y_{\ell^*}} \ge \alpha$ and again the inequality follows.
Finally, note that since the quantity is linear away from the points
$y_k$, we have
\begin{equation*}
  \sup_{x\ge 0} \rho-x-\sum_{k=1}^{n-1}\av{y_k-x} = \max_k \rho-y_k -\sum_{\ell \neq k} \av{y_\ell-y_k},
\end{equation*}
and we are done.
\end{proof}

This proposition verifies the intuition that ``spreading'' the mass in
the off-diagonal terms improves the shifted Gershgorin bound while not
changing the unshifted Gershgorin bound at all.  For example, let us
consider all symmetric matrices with nonnegative coefficients with all
diagonal entries equal to $\rho$ and all off-diagonal sums equal to
$\rho$.  Then for any $A$ in this class, $\lb G A = 0$.  However, $\ls
G A$ can be as large as $\rho(N-2)/(N-1)$ if all of the off-diagonal
entries are the same.  Moreover, it is not hard to see that if the
off-diagonal entries differ by no more than $\epsilon$, then $\ls G A
= \rho(N-2)/(N-1)-O(\epsilon)$.  In this sense, spreading the
off-diagonal mass as evenly as possible helps the shifted Gershgorin
bound.

Conversely, concentrating the off-diagonal mass makes the shifted
Gershgorin bound worse, and, for example, if we consider the case
where each row has one entry equal to $\rho$ and the rest zero, then
$\ls G A = \lb G A=0$.  Moreover, note that this bound cannot be
improved without further information, since it is sharp for
$\rho(I+P)$ where $P$ is any permutation matrix with a two-cycle.

\section{Conclusions}\label{sec:outtro}

We have shown a few cases where shifting the entries of a matrix leads
to better bounds on the spectrum of the matrix.  We can think of these
results as a flip of the standard Perron--Frobenius theorem; the
method of this paper gives good bounds on the lowest eigenvalue of a
positive matrix rather than the largest.

We might ask whether the results considered here are optimal, in the
sense of obtaining a spectral bound from a linear program in terms of
the entries of a matrix.  Of course the last condition is necessary:
writing down the characteristic polynomial of a matrix and solving it
gives excellent bounds on the eigenvalues, but this is a notoriously
challenging approach.  Most of the existing methods mentioned above
(again see~\cite{Varga.book} for a comprehensive review) use some
function of the off-diagonal terms, e.g. their sum, or their sum minus
one distinguished element, or perhaps two subsums of the entries. This
is true of all of the methods mentioned above, as is typical of
results of this type.  The shifted Gershgorin method uses the actual
off-diagonal entries, and as we have shown above, its efficacy is a
function as much of the variance of these off-diagonal terms as their
sum.  As we showed numerically in Section~\ref{sec:randomnumerics},
shifted Gershgorin beats even a nonlinear method very often in a large
family of matrices (to be fair, it was a family designed to work well
with shifted Gershgorin, but in that class it works very well).
Although checking against every existing method is beyond the scope of
this paper, it seems likely that one can construct examples where
shifted Gershgorin beats any existing method that uses less
information about the off-diagonal terms than their actual values.

We also might ask whether this technique can be improved by shifting
by a matrix other than ${\bf 1}$ (cf.~\cite{Higham.Tisseur.03} for a
similar perspective on a more complicated bounding problem).  Of
course, if we choose any positive semidefinite matrix $C$, write $A =
B + xC$, and optimize $x$ to obtain the best bounds on the matrix $B$,
this will also give estimates on the eigenvalues of $A$.  It surely is
true that for a given matrix $A$, there could be a choice of $C$ that
does better than ${\bf 1}$ in this regard by exploiting some feature
of the original matrix $A$. It seems unlikely that there could be a
matrix $C$ that generally beats ${\bf 1}$, especially in light of the
results of Section~\ref{sec:spread}.  Also, there would be the
challenge of knowing $C$ was positive semi-definite in the first
place: it seems unlikely that one could bootstrap this method by
writing $C = D + y {\bf 1}$, since this would give $A = (B+xD) +
xy{\bf 1}$, and one is still optimizing over multiples of ${\bf 1}$.
However, if there is some known structure of $A$ that matches the
structure of a known semi-definite $C$ (e.g. $C$ is a covariance
matrix or more generally a Gram matrix) then this might allow us to
obtain tighter bounds on the spectrum of $A$.

\section*{Acknowledgments}
The author would like to thank Jared Bronski for useful discussions.

\end{document}